\documentclass[leqno,12pt]{article} 
\setlength{\textheight}{23cm}
\setlength{\textwidth}{16cm}
\setlength{\oddsidemargin}{0cm}
\setlength{\evensidemargin}{0cm}
\setlength{\topmargin}{0cm}
\usepackage{amsmath, amssymb}
\usepackage{amsthm} 
%
\newcommand\supp{\mathop{\rm supp}}
\newcommand\esssup{\mathop{\rm ess \, sup}}
\newcommand\essinf{\mathop{\rm ess \, inf}}

%
%
\theoremstyle{plain} 
\newtheorem{theorem}{\indent\sc Theorem}[section]
\newtheorem{lemma}[theorem]{\indent\sc Lemma}

\newtheorem{proposition}[theorem]{\indent\sc Proposition}

\theoremstyle{definition} 
\newtheorem{definition}[theorem]{\indent\sc Definition}
\newtheorem{remark}[theorem]{\indent\sc Remark}

%

%

\makeatletter
\def\address#1#2{\begingroup
\noindent\parbox[t]{7.8cm}{%
\small{\scshape\ignorespaces#1}\par\vskip1ex
\noindent\small{\itshape E-mail address}%
\/: #2\par\vskip4ex}\hfill%
\endgroup}%
\makeatother
%
\pagestyle{myheadings}
\markright{ } 
\title{Convolution operators and variable Hardy spaces on the Heisenberg group} 
\author{
%
%
\textsc{Pablo Rocha} 
}
\date{} 
%

\begin{document}

\maketitle

\footnote{ 
2020 \textit{Mathematics Subject Classification}: 42B25, 42B30, 42B35, 43A80.
}
\footnote{ 
\textit{Key words and phrases}:
variable Hardy spaces, atomic decomposition, convolution operators, Heisenberg group.
}

\begin{abstract}
Let $\mathbb{H}^{n}$ be the Heisenberg group. For $0 \leq \alpha < Q=2n+2$ and $N \in \mathbb{N}$ we consider exponent functions
$p(\cdot) : \mathbb{H}^{n} \to (0, +\infty)$, which satisfy log-H\"older conditions, such that $\frac{Q}{Q+N} < p_{-} \leq p(\cdot) \leq p_{+} < \frac{Q}{\alpha}$. In this article we prove the $H^{p(\cdot)}(\mathbb{H}^{n}) \to L^{q(\cdot)}(\mathbb{H}^{n})$ and 
$H^{p(\cdot)}(\mathbb{H}^{n}) \to H^{q(\cdot)}(\mathbb{H}^{n})$ boundedness of convolution operators with kernels of type 
$(\alpha, N)$ on $\mathbb{H}^{n}$, where $\frac{1}{q(\cdot)} = \frac{1}{p(\cdot)} - \frac{\alpha}{Q}$. In particular, the Riesz potential on $\mathbb{H}^{n}$ satisfies such estimates.

(The proof of Theorem 6.4 was corrected)
\end{abstract}

\section{Introduction}

 On $\mathbb{R}^{n}$, E. Stein and G. Weiss \cite{Weiss} defined the Hardy spaces $H^{p}$, $0 < p < \infty$, by means of the theory of harmonic functions on Euclidean spaces. Later, C. Fefferman and E. Stein \cite{Fefferman} introduced real variable methods into this subject and characterized the Hardy spaces $H^{p}$ in terms of maximal functions. This second approach brought greater flexibility to the whole theory. It is well known that classical Hardy spaces $H^{p}$ with $0 < p \leq 1$ are play an important role in the harmonic analysis.
A remarkable result about Hardy spaces is that every element $f \in H^{p}$, $0 < p \leq 1$, can be expressed of the 
form $f = \sum \lambda_j a_j$, where the $\lambda_j$'s are positive numbers and the $a_j$'s are $p$-atoms (see \cite{Coifman, Latter}). This decomposition allows to study the behavior of certain operators on $H^{p}(\mathbb{R}^{n})$ by focusing one's attention on individual atoms. In principle, the continuity of an operator $T$ on $H^{p}$ can often be proved by estimating $Ta$ when $a(\cdot)$ is a $p$-atom.
Many important operators are better behaved on Hardy spaces $H^{p}$ than on Lebesgue spaces $L^{p}$ in the range $0 < p \leq 1$. For instance, when $p \leq 1$, Riesz transforms on $\mathbb{R}^{n}$ are not bounded on $L^{p}$; however, they are bounded on Hardy spaces $H^{p}$. For more results about Hardy spaces the reader can consult \cite{Coifman2, Taibleson, Stein3, Lu, Uchi, grafakos2}.

G. Folland and E. Stein \cite{Folland} generalized the theory of Hardy spaces $H^{p}$ on homogeneous groups. Two of the main results in this theory are the maximal function characterization of $H^{p}$ and the atomic decomposition of their elements. With this framework, they studied, among others topics, the behavior of convolution operators with kernels of type $(\alpha, N)$ on these spaces.

On the other hand, with the appearing of the theory of variable exponents the Hardy type spaces on $\mathbb{R}^{n}$ received a new impetus 
(see \cite{Orlicz, Kovacik, Diening2, Fiorenza, Nakai, Cruz-Uribe2}). In \cite{Rocha1}, the author jointly with M. Urciuolo proved the 
$H^{p(\cdot)}(\mathbb{R}^{n}) \to L^{q(\cdot)}(\mathbb{R}^{n})$ boundedness of certain generalized Riesz potentials and the 
$H^{p(\cdot)}(\mathbb{R}^{n}) \to H^{q(\cdot)}(\mathbb{R}^{n})$ boundedness of Riesz potentials via the infinite atomic and molecular decomposition developed in \cite{Nakai}. In \cite{Rocha3}, the author gave another proof of the results obtained in \cite{Rocha1}, but by using the finite atomic decomposition given in \cite{Cruz-Uribe2}.

On the Heisenberg group $\mathbb{H}^{n}$, J. Fang and J. Zhao \cite{Fang} gave a variety of distinct approaches, based on differing definitions, all lead to the same notion of variable Hardy space $H^{p(\cdot)}(\mathbb{H}^{n})$. One of their main goals is the atomic decomposition of elements in $H^{p(\cdot)}(\mathbb{H}^{n})$, as an application of the atomic decomposition they proved that singular integrals are bounded on $H^{p(\cdot)}(\mathbb{H}^{n})$.

Let $0 \leq \alpha < Q:=2n+2$ and $N \in \mathbb{N}$. For $0 < \alpha < Q$, a function $K_{\alpha} \in C^{N}(\mathbb{H}^{n} \setminus \{ e \})$ is said to be a kernel of type $(\alpha, N)$ on $\mathbb{H}^{n}$ if
\begin{equation} \label{decay0}
\left|(\widetilde{X}^{I}K_{\alpha})(z) \right| \lesssim \rho(z)^{\alpha - Q - d(I)} \,\,\,\, \text{for all} \,\, d(I) \leq N \,\,\, \text{and all} \,\, z \neq e,
\end{equation}
where $\widetilde{X}^{I}$ is the right-invariant higher order derivative associated to the multiindex $I = (i_1, ..., i_{2n}, i_{2n+1})$,
$d(I) = i_1 + \cdot \cdot \cdot i_{2n} + 2 i_{2n+1}$, and $\rho(\cdot)$ is the \textit{Koranyi norm} on $\mathbb{H}^{n}$ given by 
(\ref{Koranyi norm}). A distribution $K_{0}$ is said to be a kernel of type $(0,N)$ on $\mathbb{H}^{n}$ if is of class $C^{N}$ on 
$\mathbb{H}^{n} \setminus \{ e \}$, satisfies (\ref{decay0}) with $\alpha = 0$, and $\| f \ast K_0 \|_{2} \leq \| f \|_2$ for all 
$f \in \mathcal{S}(\mathbb{H}^{n})$.

The purpose of this work is to generalize \cite[Theorem 6.10]{Folland} to the context of variable exponents when 
$G = \mathbb{H}^{n}$. More precisely, we will prove that the operators defined by right convolution with kernels of type $(\alpha, N)$ 
on $\mathbb{H}^{n}$ can be extended to bounded operators $H^{p(\cdot)}(\mathbb{H}^{n}) \to L^{q(\cdot)}(\mathbb{H}^{n})$ and 
$H^{p(\cdot)}(\mathbb{H}^{n}) \to H^{q(\cdot)}(\mathbb{H}^{n})$ for certain variable exponents $p(\cdot)$ and $q(\cdot)$ related by 
$\frac{1}{p(\cdot)} - \frac{1}{q(\cdot)} = \frac{\alpha}{Q}$ with $0 \leq \alpha < Q$ (see Theorems \ref{Hp-Lq} and \ref{Hp-Hq} below). As 
an application of these results we obtain that Riesz potential $\mathcal{R}_{\alpha}$ on $\mathbb{H}^{n}$ admits such extensions (see Theorem \ref{Riesz estimates}).

\

This paper is organized as follows. Section 2 presents the basics about the Heisenberg group $\mathbb{H}^{n}$ and some properties of 
variable Lebesgue spaces $L^{p(\cdot)}\left(\mathbb{H}^{n}\right)$. In Section 3 we state three auxiliary results, two of them referring to the $L^{p(\cdot)}\left(\mathbb{H}^{n}\right)$ - norm of the characteristic functions of balls in $\mathbb{H}^{n}$ and the other one is a supporting result. In Section 4, we establish the off-diagonal version of the Fefferman-Stein vector-valued maximal inequality for the fractional maximal operator on $\mathbb{H}^{n}$ in the context of variable Lebesgue spaces, this result is crucial to get the main goals of Section 6. In Section 5, we recall the definition and the atomic decomposition of variable Hardy spaces on $\mathbb{H}^{n}$ given in 
\cite{Fang}. Finally, in Section 6 we prove our main results.

\

\textbf{Notation:} The symbol $A\lesssim B$ stands for the inequality $A \leq cB$ for some positive constant $c$. The symbol $A \approx B$ stands for $B \lesssim A \lesssim B$. For a measurable subset $E\subseteq \mathbb{H}^{n}$ we denote by $\left\vert E\right\vert $ and 
$\chi_{E}$ the Haar measure of $E$ and the characteristic function of $E$ respectively.

\section{Preliminaries} 

Let $J$ be the $2n \times 2n$ skew-symmetric matrix given by
\[
J= \frac{1}{2} \left( \begin{array}{cc}
                           0 & -I_n \\
                           I_n & 0 \\
                                      \end{array} \right)
\]
where $I_n$ is the $n \times n$ identity matrix.

The  Heisenberg group $\mathbb{H}^{n}$ is a homogeneous group whose underlying manifold is  $\mathbb{R}^{2n} \times \mathbb{R}$
(see \cite{Folland, Fischer}). This is, $\mathbb{H}^{n}$ can be identified with $\mathbb{R}^{2n} \times \mathbb{R}$ with group law 
(noncommutative) given by
\[
(x,t) \cdot (y,s) = \left( x+y, t+s + x^{t} J y \right)
\]
and dilations 
\[
r \cdot (x,t) = (rx, r^{2}t), \,\,\,\ r > 0.
\]
With this structure we have that $e = (0,0)$ is the neutral element, $(x, t)^{-1}=(-x, -t)$ is the inverse of $(x, t)$, and
$r \cdot((x,t) \cdot (y,s)) = (r\cdot(x,y)) \cdot (r\cdot(y,s))$. The topology in $\mathbb{H}^{n}$ is the induced by $\mathbb{R}^{2n} \times \mathbb{R} \equiv \mathbb{R}^{2n+1}$, so the borelian sets of $\mathbb{H}^{n}$ are identified with those of $\mathbb{R}^{2n+1}$. The Haar measure in $\mathbb{H}^{n}$ is the Lebesgue measure of $\mathbb{R}^{2n+1}$, thus $L^{p}(\mathbb{H}^{n}) \equiv L^{p}(\mathbb{R}^{2n+1})$, 
$0 < p \leq \infty$. Moreover, for $f \in L^{1}(\mathbb{H}^{n})$ and each $w \in \mathbb{H}^{n}$
\begin{equation} \label{invariant transl}
\int_{\mathbb{H}^{n}} f(w \cdot z) \, dz = \int_{\mathbb{H}^{n}} f(z \cdot w) \, dz = \int_{\mathbb{H}^{n}} f(z) \, dz, 
\end{equation}
for $r > 0$ fixed, we also have
\[
\int_{\mathbb{H}^{n}} f(r \cdot z) \, dz = r^{-Q} \int_{\mathbb{H}^{n}} f(z) \, dz,
\]
where $Q= 2n+2$. The number $2n+2$ is known as the {\it homogeneous dimension} of $\mathbb{H}^{n}$ (we observe that the {\it topological dimension} of $\mathbb{H}^{n}$ is $2n+1$).

The {\it Koranyi norm} on $\mathbb{H}^{n}$ is the function $\rho : \mathbb{H}^{n} \to [0, \infty)$ defined by
\begin{equation} \label{Koranyi norm}
\rho(x,t) = \left( |x|^{4} + 16 \, t^{2}  \right)^{1/4}, \,\,\, (x,t) \in \mathbb{H}^{n},
\end{equation}
where $| \cdot |$ is the usual Euclidean norm on $\mathbb{R}^{2n}$. Let $z = (x,t)$ and $w = (y,s) \in \mathbb{H}^{n}$, the Koranyi norm satisfies the following properties
\begin{eqnarray*}
\rho(z) & = & 0 \,\,\, \text{if and only if} \,\, z = e, \\
\rho(z^{-1}) & = & \rho(z) \,\,\,\, \text{for all} \,\, z \in \mathbb{H}^{n}, \\
\rho(r \cdot z) & = & r \rho(z) \,\,\,\, \text{for all} \,\, z \in \mathbb{H}^{n} \,\, \text{and all} \,\, r > 0, \\
\rho(z \cdot w) & \leq & \rho(z) + \rho(w) \,\,\,\, \text{for all} \,\, z, w \in \mathbb{H}^{n}, \\
| \rho(z) - \rho(w) | & \leq & \rho(z \cdot w) \,\,\,\, \text{for all} \,\, z, w \in \mathbb{H}^{n}.
\end{eqnarray*}
Moreover, $\rho$ is continuous on $\mathbb{H}^{n}$ and is smooth on $\mathbb{H}^{n} \setminus \{ e \}$. The $\rho$ - ball centered at 
$z_0 \in \mathbb{H}^{n}$ with radius $\delta > 0$ is defined by
\[
B(z_0, \delta) := \{ w \in \mathbb{H}^{n} : \rho(z_0^{-1} w) < \delta \}.
\]

\begin{remark}
The topology in $\mathbb{H}^{n}$ induced by the $\rho$ - balls coincides with the Euclidean topology of $\mathbb{R}^{2n+1}$ (see
\cite[Proposition 3.1.37]{Fischer}).
\end{remark}

Let $|B(z_0, \delta)|$ be the Haar measure of the $\rho$ - ball $B(z_0, \delta) \subset \mathbb{H}^{n}$. Then, 
\[
|B(z_0, \delta)| = c \delta^{Q},
\]
where $c = |B(e,1)|$ and  $Q = 2n+2$. Given $\lambda > 0$, we put $\lambda B = \lambda B(z_0, \delta) = 
B(z_0, \lambda \delta)$. So $|\lambda B| = \lambda^{Q}	|B|$.

\begin{remark}
For any $z, z_0 \in \mathbb{H}^{n}$ and $\delta >0$, we have
\[
z_0 \cdot B(z, \delta) = B(z_0 z, \delta).
\]
In particular, $B(z, \delta) = z \cdot B(e, \delta)$. It is also easy to check that $B(e, \delta) = \delta \cdot B(e,1)$ for any $\delta > 0$.
\end{remark}
\begin{remark} \label{cambio de centro}
If $f \in L^{1}(\mathbb{H}^{n})$, then for every $\rho$ - ball $B$ and every $z_0 \in \mathbb{H}^{n}$, by (\ref{invariant transl}), we have
\[
\int_{B} f(z) \, dz = \int_{z_{0}^{-1} \cdot B} f(z_0 \cdot u) \, du.
\]
\end{remark}

If $f$ and $g$ are measurable functions on $\mathbb{H}^{n}$, their convolution $f * g$ is defined by
\[
(f * g)(z) := \int_{\mathbb{H}^{n}} f(w) g(w^{-1} \cdot z) \, dw,
\]
when the integral is finite.

For every $i = 1,2, ..., 2n+1$, $X_i$ denotes the left invariant vector field which is defined by
\[
(X_i f)(x,t) = \frac{d}{ds} f((x,t) \cdot s e_i) |_{s=0},
\]
where $\{ e_i \}_{i=1}^{2n+1}$ is the canonical basis of $\mathbb{R}^{2n+1}$. Thus
\[
X_i = \frac{\partial}{\partial x_i} + \frac{x_{i+n}}{2} \frac{\partial}{\partial t}, \,\,\,\, i=1, 2, ..., n;
\]
\[
X_{i+n} = \frac{\partial}{\partial x_{i+n}} - \frac{x_{i}}{2} \frac{\partial}{\partial t}, \,\,\, i=1, 2, ..., n;
\]
and
\[
X_{2n+1} = \frac{\partial}{\partial t}.
\]

Similarly, we define the right invariant vector fields $\{ \widetilde{X}_i \}_{i=1}^{2n+1}$ by
\[
(\widetilde{X}_i f)(x,t) = \frac{d}{ds} f(s e_i \cdot (x,t)) |_{s=0}.
\]
Then
\[
\widetilde{X}_i = \frac{\partial}{\partial x_i} - \frac{x_{i+n}}{2} \frac{\partial}{\partial t}, \,\,\,\, i=1, 2, ..., n;
\]
\[
\widetilde{X}_{i+n} = \frac{\partial}{\partial x_{i+n}} + \frac{x_{i}}{2} \frac{\partial}{\partial t}, \,\,\, i=1, 2, ..., n;
\]
and
\[
\widetilde{X}_{2n+1} = \frac{\partial}{\partial t}.
\]

Given a multiindex $I=(i_1,i_2, ..., i_{2n}, i_{2n+1}) \in (\mathbb{N} \cup \{ 0 \})^{2n+1}$, we set
\[
|I| = i_1 + i_2 + \cdot \cdot \cdot + i_{2n} + i_{2n+1}, \hspace{.5cm} d(I) = i_1 + i_2 + \cdot \cdot \cdot + i_{2n} + 2 \, i_{2n+1}.
\]
The amount $|I|$ is called the length of $I$ and $d(I)$ the homogeneous degree of $I$. We adopt the following multiindex notation for
higher order derivatives and for monomials on $\mathbb{H}^{n}$. If $I=(i_1, i_2, ..., i_{2n+1})$ is a multiindex, 
$X = \{ X_i \}_{i=1}^{2n+1}$, $\widetilde{X} =  \{ \widetilde{X}_{i} \}_{i=1}^{2n+1}$, and 
$z = (x,t) = (x_1, ..., x_{2n}, t) \in \mathbb{H}^{n}$, we put
\[
X^{I} := X_{1}^{i_1} X_{2}^{i_2} \cdot \cdot \cdot X_{2n+1}^{i_{2n+1}}, \,\,\,\,\,\, 
\widetilde{X}^{I} := \widetilde{X}_{1}^{i_1} \widetilde{X}_{2}^{i_2} \cdot \cdot \cdot \widetilde{X}_{2n+1}^{i_{2n+1}},
\]
and
\[
z^{I} := x_{1}^{i_1} \cdot \cdot \cdot x_{2n}^{i_{2n}} \cdot t^{i_{2n+1}}.
\]
A computation give
\[
X^{I}(f(r \cdot z)) = r^{d(I)} (X^{I}f)(r\cdot z), \,\,\,\,\,\, \widetilde{X}^{I}(f(r \cdot z)) = r^{d(I)} (\widetilde{X}^{I}f)(r\cdot z)
\]
and 
\[
(r\cdot z)^{I} = r^{d(I)} z^{I}.
\]
So, the operators $X^{I}$ and $\widetilde{X}^{I}$ and the monomials $z^{I}$ are homogeneous of degree $d(I)$. The operators $X^{I}$ and 
$\widetilde{X}^{I}$ interact with the convolutions in the following way
\[
X^{I}(f \ast g) = f \ast (X^{I}g), \,\,\,\,\,\, \widetilde{X}^{I}(f \ast g) = (\widetilde{X}^{I} f) \ast g, \,\,\, \text{and} \,\,\,
(X^{I} f) \ast g = f \ast (\widetilde{X}^{I} g).
\]

The Schwartz space $\mathcal{S}(\mathbb{H}^{n})$ is defined by
\[
\mathcal{S}(\mathbb{H}^{n}) = \left\{ \phi \in C^{\infty}(\mathbb{H}^{n}) : \sup_{z \in \mathbb{H}^{n}} (1+\rho(z))^{L} 
|(X^{I}\phi)(z)| < \infty \,\,\, \forall \,\, L \in \mathbb{N}_{0}, \, I \in (\mathbb{N}_{0})^{2n+1}  \right\}.
\]
We topologize the space $\mathcal{S}(\mathbb{H}^{n})$ with the following family of seminorms
\[
\| \phi \|_{\mathcal{S}(\mathbb{H}^{n}), \, L} = \sum_{d(I) \leq L} \sup_{z \in \mathbb{H}^{n}} (1+\rho(z))^{(L+1)(Q+1)} |(X^{I}\phi)(z)| \,\,\,\,\,\,\, 
(L \in \mathbb{N}_{0}),
\]
with $\mathcal{S}'(\mathbb{H}^{n})$ we denote the dual space of $\mathcal{S}(\mathbb{H}^{n})$. 

\

Now, we briefly present the basics of variable Lebesgue spaces. Let $p(\cdot) : \mathbb{H}^{n} \to (0, \infty)$ be a measurable function. Given a measurable set $E \subset \mathbb{H}^{n}$, let
\[
p_{-}(E) = \essinf_{ z \in E } p(z), \,\,\,\, \text{and} \,\,\,\, p_{+}(E) = \esssup_{z \in E} p(z).
\]
When $E = \mathbb{H}^{n}$, we will simply write $p_{-} := p_{-}(\mathbb{H}^{n})$, $p_{+} := p_{+}(\mathbb{H}^{n})$ and 
$\underline{p} := \min \{ p_{-}, 1 \}$. Such function $p(\cdot)$ is called an exponent function.

We define the variable Lebesgue space $L^{p(\cdot)} = L^{p(\cdot)}(\mathbb{H}^{n})$ to be the set of all measurable functions 
$f: \mathbb{H}^{n} \to \mathbb{C}$ such that for some $\lambda > 0$ 
\[
\int_{\mathbb{H}^{n}} |f(z)/\lambda|^{p(z)} dz < \infty.
\]
This becomes a quasi normed space when equipped with the Luxemburg norm
\[
\| f \|_{L^{p(\cdot)}} = \inf \left\{ \lambda > 0 : \int_{\mathbb{H}^{n}} |f(z)/\lambda|^{p(z)} dz \leq 1 \right\}.
\]
The following result follows from the definition of the $L^{p(\cdot)}$ - norm.

\begin{lemma} \label{potencia s}
Given a measurable function $p(\cdot) : \mathbb{H}^{n} \to (0, \infty)$ with $0 < p_{-} \leq p_{+} < \infty$, then 
\\
(i) $\| f \|_{L^{p(\cdot)}} \geq 0$ and $\| f \|_{L^{p(\cdot)}} = 0$ if and only if $f \equiv 0$ a.e.,
\\
(ii) $\| c f \|_{L^{p(\cdot)}} = |c| \| f \|_{L^{p(\cdot)}}$ for all $f \in L^{p(\cdot)}$ and all $c \in \mathbb{C}$,
\\
(iii) $\| f +  g \|_{L^{p(\cdot)}} \leq 2^{1/\underline{p} - 1}(\| f \|_{L^{p(\cdot)}} + \| g \|_{L^{p(\cdot)}})$ for all 
$f, g \in L^{p(\cdot)}$,
\\
(iv) $\| f \|_{L^{p(\cdot)}}^{s} = \| |f|^{s} \|_{L^{p(\cdot)/s}}$ for every $s > 0$.
\end{lemma}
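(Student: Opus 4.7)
The plan is to treat items (i), (ii), (iv) via elementary substitutions in the Luxemburg modular and to isolate the real content in the quasi-triangle inequality (iii), which I would handle in two cases depending on whether $p_{-} \ge 1$ or $0 < p_{-} < 1$.

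For (i), non-negativity is immediate from the infimum definition, and $f = 0$ a.e.\ trivially gives $\|f\|_{L^{p(\cdot)}} = 0$. The converse follows because if $\|f\|_{L^{p(\cdot)}} = 0$ then there exist $\lambda_{n} \downarrow 0$ with $\int |f/\lambda_{n}|^{p(z)}\, dz \le 1$, yet on any set of positive measure where $|f| \ge \delta > 0$ the integrand is bounded below by $(\delta/\lambda_{n})^{p_{-}}$, which blows up, forcing this set to be null. For (ii), substituting $\mu = \lambda/|c|$ (for $c \neq 0$) sets up a bijection between admissible $\lambda$'s for $cf$ and admissible $\mu$'s for $f$, while the case $c = 0$ is absorbed by (i). For (iv), the substitution $\mu = \lambda^{s}$ in the defining integral
\[
\int \left|\frac{|f|^{s}}{\mu}\right|^{p(z)/s} dz = \int \left|\frac{f}{\lambda}\right|^{p(z)} dz
\]
yields the identity.

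The main obstacle is (iii). When $p_{-} \ge 1$, so that $\underline{p} = 1$ and the constant equals $1$, I would pick $\lambda_{1} > \|f\|_{L^{p(\cdot)}}$, $\lambda_{2} > \|g\|_{L^{p(\cdot)}}$, write
\[
\left|\frac{f+g}{\lambda_{1}+\lambda_{2}}\right| \le \frac{\lambda_{1}}{\lambda_{1}+\lambda_{2}} \cdot \frac{|f|}{\lambda_{1}} + \frac{\lambda_{2}}{\lambda_{1}+\lambda_{2}} \cdot \frac{|g|}{\lambda_{2}},
\]
apply pointwise convexity of $t \mapsto t^{p(z)}$ (valid since $p(z) \ge 1$ a.e.), and integrate to deduce that the modular at $\lambda_{1}+\lambda_{2}$ is $\le 1$, giving the ordinary triangle inequality. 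When $0 < p_{-} < 1$, I would reduce to the previous case via (iv): setting $q(\cdot) := p(\cdot)/p_{-}$, one has $q_{-} = 1$, and (iv) gives $\|f\|_{L^{p(\cdot)}}^{p_{-}} = \| |f|^{p_{-}} \|_{L^{q(\cdot)}}$. Combining the pointwise subadditivity $(a+b)^{p_{-}} \le a^{p_{-}} + b^{p_{-}}$ with the just-established triangle inequality in $L^{q(\cdot)}$ yields $\|f+g\|^{p_{-}} \le \|f\|^{p_{-}} + \|g\|^{p_{-}}$, and the stated constant $2^{1/p_{-} - 1}$ is then extracted from the elementary bound $(a+b)^{1/p_{-}} \le 2^{1/p_{-} - 1}(a^{1/p_{-}} + b^{1/p_{-}})$, which follows from Jensen applied to the convex function $u \mapsto u^{1/p_{-}}$ (here $1/p_{-} > 1$). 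The only bookkeeping care is to verify that the modulars involved remain finite during the passage between the exponents $p(\cdot)$ and $p(\cdot)/p_{-}$, but no analytic subtlety beyond these standard manipulations is required.
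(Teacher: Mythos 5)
Your proposal is correct: all four items are verified by the standard substitutions in the Luxemburg modular, and the two-case treatment of (iii) (convexity when $p_{-}\geq 1$, reduction via (iv) and the subadditivity of $t\mapsto t^{p_{-}}$ when $p_{-}<1$) yields exactly the constant $2^{1/\underline{p}-1}$. The paper offers no proof at all --- it simply asserts that the lemma ``follows from the definition of the $L^{p(\cdot)}$-norm'' --- and your argument is precisely the standard from-the-definition verification being alluded to, so there is nothing to compare beyond noting that you have supplied the omitted details correctly.
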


For an exponent function $p(\cdot) : \mathbb{H}^{n} \to (1, \infty)$, its conjugate function $p'(\cdot)$ is defined by 
$\frac{1}{p(z)} + \frac{1}{p'(z)} = 1$. A straightforward computation shows that
\[
(p'(\cdot))_{+} = (p_{-})', \,\,\,\,\, \text{and} \,\,\,\,\,  (p'(\cdot))_{-} = (p_{+})'.
\]

We have the following generalization of H\"older's inequality and an equivalent expression for the $L^{p(\cdot)}$ - norm.

\begin{lemma} (H\"older's inequality) \label{Holder ineq}
Let $p(\cdot) : \mathbb{H}^{n} \to (1, \infty)$ be a measurable function and let $p'(\cdot)$ be its conjugate function. Then, 
\[
\int_{\mathbb{H}^{n}} |f(z) g(z)| dz \leq 2 \| f \|_{L^{p(\cdot)}} \| g \|_{L^{p'(\cdot)}}.
\]
\end{lemma}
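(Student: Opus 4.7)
The plan is to establish this via the standard variable-exponent adaptation of the classical Hölder argument: normalize, apply Young's inequality pointwise with the variable exponent $p(z)$, and integrate.

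First I would dispense with trivial cases: if $\|f\|_{L^{p(\cdot)}} = 0$ or $\|g\|_{L^{p'(\cdot)}} = 0$, then $f$ or $g$ vanishes almost everywhere and the inequality is immediate; if either norm is $+\infty$ the bound is vacuous. So I may assume both norms are finite and positive, and by homogeneity (Lemma \ref{potencia s} (ii)) I reduce to the case $\|f\|_{L^{p(\cdot)}} = \|g\|_{L^{p'(\cdot)}} = 1$.

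The key pointwise tool is the classical Young inequality: for real numbers $p, p' \in (1, \infty)$ with $1/p + 1/p' = 1$ and $a, b \geq 0$, one has $ab \leq a^p/p + b^{p'}/p'$. I apply this with $a = |f(z)|$, $b = |g(z)|$, and exponent $p = p(z)$ (so that $p' = p'(z)$), giving
\[
|f(z) g(z)| \leq \frac{|f(z)|^{p(z)}}{p(z)} + \frac{|g(z)|^{p'(z)}}{p'(z)} \leq |f(z)|^{p(z)} + |g(z)|^{p'(z)},
\]
where in the second step I used $1/p(z), 1/p'(z) \leq 1$. Integrating over $\mathbb{H}^{n}$ yields
\[
\int_{\mathbb{H}^{n}} |f(z) g(z)| \, dz \leq \int_{\mathbb{H}^{n}} |f(z)|^{p(z)} \, dz + \int_{\mathbb{H}^{n}} |g(z)|^{p'(z)} \, dz.
\]

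To finish I need the unit-ball property: under the normalization $\|f\|_{L^{p(\cdot)}} = 1$, one has $\int |f(z)|^{p(z)}\, dz \leq 1$. This follows from the definition of the Luxemburg norm by choosing a sequence $\lambda_n \downarrow 1$ with $\int |f(z)/\lambda_n|^{p(z)}\, dz \leq 1$, noting that $p_{+} < \infty$ guarantees $|f/\lambda_n|^{p(z)} \nearrow |f|^{p(z)}$ pointwise a.e. as $\lambda_n \to 1^{+}$, so monotone convergence gives $\int |f(z)|^{p(z)}\, dz \leq 1$; the same argument applies to $g$ with exponent $p'(\cdot)$ (recalling $(p'(\cdot))_{+} = (p_{-})' < \infty$ since $p_{-} > 1$). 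Combining these two bounds with the displayed integral inequality produces
\[
\int_{\mathbb{H}^{n}} |f(z) g(z)| \, dz \leq 2 = 2 \|f\|_{L^{p(\cdot)}} \|g\|_{L^{p'(\cdot)}},
\]
which is the desired estimate. No real obstacle arises here; the only subtle point is justifying the unit-ball property when the infimum in the Luxemburg norm need not be attained, which is handled by the monotone-convergence argument above using $p_{+} < \infty$.
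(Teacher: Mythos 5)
Your proof is correct. The paper itself does not give an argument here: it simply cites \cite[Lemma 3.2.20]{Diening2}, so your contribution is a self-contained derivation of exactly the inequality that reference provides, by the standard route (normalize via Lemma \ref{potencia s}(ii), apply Young's inequality pointwise with exponent $p(z)$, drop the weights $1/p(z), 1/p'(z) \leq 1$, and invoke the unit-ball property of the Luxemburg norm, proved by monotone convergence since the infimum defining the norm need not be attained). Each step checks out, and you correctly dispose of the degenerate cases. Two cosmetic remarks: the monotone convergence step does not actually need $p_{+} < \infty$ or $p_{-} > 1$ --- it only needs $1 < p(z) < \infty$ pointwise (hence $p'(z) < \infty$ pointwise), which is exactly what the hypothesis $p(\cdot) : \mathbb{H}^{n} \to (1,\infty)$ gives, and this matters because the lemma as stated does not assume $1 < p_{-} \leq p_{+} < \infty$. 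Also, keeping the weights $1/p(z)$ and $1/p'(z)$ before integrating would yield the slightly sharper constant $\frac{1}{p_{-}} + \frac{1}{(p_{+})'} \leq 2$, though the constant $2$ is all the paper uses.
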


\begin{proof} The lemma follows from \cite[Lemma 3.2.20]{Diening2}.
\end{proof}

\begin{proposition} \label{norma equivalente}
Let $p(\cdot) : \mathbb{H}^{n} \to (1, \infty)$ be a measurable function and let $p'(\cdot)$ be its conjugate function. Then
\[
\| f \|_{L^{p(\cdot)}} \approx \sup \left\{ \int_{\mathbb{H}^{n}} |f(z) g(z)| dz : \| g \|_{L^{p'(\cdot)}} \leq 1
\right\}.
\]
\end{proposition}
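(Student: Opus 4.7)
The plan is to establish the two inequalities separately. The direction $\sup\{\int|fg|:\|g\|_{L^{p'(\cdot)}}\le 1\} \leq 2\|f\|_{L^{p(\cdot)}}$ is immediate from H\"older's inequality (Lemma \ref{Holder ineq}): for any admissible $g$,
$$\int_{\mathbb{H}^{n}}|f(z)g(z)|\,dz \leq 2\|f\|_{L^{p(\cdot)}}\|g\|_{L^{p'(\cdot)}} \leq 2\|f\|_{L^{p(\cdot)}},$$
and taking supremum over $g$ yields the claim.

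For the reverse inequality, I would argue by construction of a near-optimal test function. First reduce to $f\geq 0$ by replacing $f$ with $|f|$ (and absorbing $\overline{\operatorname{sign}(f)}$ into $g$, which does not change its norm). Handle the case $\|f\|_{L^{p(\cdot)}}=\infty$ by truncating $f$ to bounded functions supported on balls $B(e,R)$ and passing to the limit, so assume $0<\|f\|_{L^{p(\cdot)}}<\infty$. Fix $0<\lambda<\|f\|_{L^{p(\cdot)}}$; by the definition of the Luxemburg norm,
$$M := \int_{\mathbb{H}^{n}}(f(z)/\lambda)^{p(z)}\,dz > 1.$$
Define $g_{\lambda}(z):=(f(z)/\lambda)^{p(z)-1}$. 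Using $(p(z)-1)p'(z)=p(z)$, a direct computation shows that $\int g_{\lambda}(z)^{p'(z)}\,dz = M$.

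The key step is to bound $\|g_{\lambda}\|_{L^{p'(\cdot)}}$ from above using the standard norm-modular relationship for variable Lebesgue spaces: if the modular of $h$ equals $M\geq 1$, then $\|h\|_{L^{p'(\cdot)}}\leq M^{1/(p'(\cdot))_{-}}=M^{1/(p_{+})'}$. Normalizing, set $g := g_{\lambda}/\|g_{\lambda}\|_{L^{p'(\cdot)}}$, so $\|g\|_{L^{p'(\cdot)}}=1$, and compute
$$\int_{\mathbb{H}^{n}}f(z)g(z)\,dz = \frac{1}{\|g_{\lambda}\|_{L^{p'(\cdot)}}}\int_{\mathbb{H}^{n}}f(z)(f(z)/\lambda)^{p(z)-1}\,dz = \frac{\lambda M}{\|g_{\lambda}\|_{L^{p'(\cdot)}}} \geq \lambda\,M^{1-1/(p_{+})'}=\lambda\,M^{1/p_{+}} \geq \lambda.$$
Letting $\lambda\uparrow\|f\|_{L^{p(\cdot)}}$ produces a supremum at least $\|f\|_{L^{p(\cdot)}}$, which together with the H\"older bound yields the equivalence.

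The main technical obstacle is a clean statement and application of the modular-norm inequality $\rho_{p'(\cdot)}(h)\geq 1 \Rightarrow \|h\|_{L^{p'(\cdot)}}\leq\rho_{p'(\cdot)}(h)^{1/(p_+)'}$; once this is in hand, the rest is arithmetic. Alternatively, one can simply invoke the analogous result for variable Lebesgue spaces on spaces of homogeneous type, e.g.\ \cite[Theorem 3.2.13]{Diening2}, since the proof there depends only on the $\sigma$-finite measure structure and transfers verbatim to $\mathbb{H}^{n}$ equipped with Haar measure.
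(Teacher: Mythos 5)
Your proposal is correct in substance but takes a genuinely different route: the paper's entire proof is a citation of \cite[Corollary 3.2.14]{Diening2} (the norm conjugate formula for the associate norm), whereas you give the standard self-contained construction of a near-extremal test function $g_{\lambda}=(f/\lambda)^{p(\cdot)-1}$. Your argument is the classical one and it works, but note that it quietly uses more than the proposition's hypotheses: the modular--norm inequality $\rho_{p'(\cdot)}(h)\geq 1 \Rightarrow \|h\|_{L^{p'(\cdot)}}\leq \rho_{p'(\cdot)}(h)^{1/(p'(\cdot))_{-}}$ requires $(p'(\cdot))_{+}=(p_{-})'<\infty$, i.e.\ $p_{-}>1$, and the finiteness of $M=\rho_{p(\cdot)}(f/\lambda)$ for $\lambda<\|f\|_{L^{p(\cdot)}}$ (needed so that $g:=g_{\lambda}/\|g_{\lambda}\|_{L^{p'(\cdot)}}$ is well defined) is only automatic when $p_{+}<\infty$; as stated the proposition allows $p_{-}=1$ and $p_{+}=\infty$, where your normalization step can degenerate to $\infty/\infty$ and the exponent bookkeeping breaks. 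Since every application in the paper has $1<p_{-}\leq p_{+}<\infty$, this is a harmless restriction in context, and your fallback of simply invoking \cite[Theorem 3.2.13]{Diening2} (whose proof indeed transfers verbatim to $\mathbb{H}^{n}$ with Haar measure) is essentially what the author does. What your direct argument buys is transparency and an explicit extremizer; what the citation buys is full generality without the $p_{-}>1$, $p_{+}<\infty$ caveats.
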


\begin{proof} The proposition follows from \cite[Corollary 3.2.14]{Diening2}.
\end{proof}

We say that an exponent function $p(\cdot) : \mathbb{H}^{n} \to (0, \infty)$ such that $0 < p_{-} \leq p_{+} < \infty$ belongs 
to $\mathcal{P}^{\log}(\mathbb{H}^{n})$, if there exist positive constants $C$, $C_{\infty}$ and $p_{\infty}$ such that $p(\cdot)$ satisfies the local log-H\"older continuity condition, i.e.:
\[
|p(z) - p(w)| \leq \frac{C}{-\log(\rho(z^{-1}w))}, \,\,\, \text{for} \,\, \rho(z^{-1}w) \leq \frac{1}{2},
\]
and is log-H\"older continuous at infinity, i.e.:
\[
|p(z) - p_{\infty}| \leq \frac{C_{\infty}}{\log(e+\rho(z))}, \,\,\, \text{for all} \,\, z \in \mathbb{H}^{n}.
\]
Here $\rho$ is the {\it Koranyi norm} given by (\ref{Koranyi norm}).

\begin{lemma} \label{exp prop}
Let $p : \mathbb{H}^{n} \to (0, \infty)$ be an exponent function. Then

$(i)$ if $1 < p_{-} \leq p_{+} < \infty$, then $p(\cdot) \in \mathcal{P}^{\log}(\mathbb{H}^{n})$ if and only if
$p'(\cdot) \in \mathcal{P}^{\log}(\mathbb{H}^{n})$, where $(p_{\infty})' = (p')_{\infty}$;

$(ii)$ if $0 < p_{-} \leq p_{+} < \infty$, then $p(\cdot) \in \mathcal{P}^{\log}(\mathbb{H}^{n})$ if and only if
$\frac{1}{p(\cdot)} \in \mathcal{P}^{\log}(\mathbb{H}^{n})$.
\end{lemma}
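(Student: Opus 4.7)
The plan is to prove both parts by direct algebraic manipulation, reducing the log-H\"older conditions for $p'(\cdot)$ (respectively $1/p(\cdot)$) to those for $p(\cdot)$ via the pointwise identities that relate them. In each case the key observation is that the denominator that appears in the algebraic reduction is uniformly bounded away from zero thanks to the hypotheses on $p_{-}$.

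For part $(i)$, since $1 < p_{-} \leq p_{+} < \infty$, the conjugate function is given pointwise by $p'(z) = p(z)/(p(z)-1)$, so I would write
\[
p'(z) - p'(w) \;=\; \frac{p(w) - p(z)}{(p(z)-1)(p(w)-1)}.
\]
The denominator is bounded below by $(p_{-}-1)^{2} > 0$, hence $|p'(z)-p'(w)| \leq (p_{-}-1)^{-2}|p(z)-p(w)|$. Substituting this bound into the local log-H\"older condition at a pair $(z,w)$ with $\rho(z^{-1}w)\leq 1/2$ immediately transfers that condition from $p(\cdot)$ to $p'(\cdot)$ (with a new constant). For the log-H\"older behavior at infinity I would run the analogous computation with $w$ replaced by the limit value: since $p(z)\to p_{\infty}$ as $\rho(z)\to\infty$ and $x\mapsto x/(x-1)$ is continuous away from $1$, one obtains both the identity $(p_{\infty})' = (p')_{\infty}$ and the estimate
\[
|p'(z) - (p_{\infty})'| \;\leq\; \frac{|p(z)-p_{\infty}|}{(p_{-}-1)^{2}},
\]
which combined with the log-H\"older decay of $p(\cdot)$ yields the decay of $p'(\cdot)$. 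The reverse implication is automatic by applying the same argument to $p'(\cdot)$, using $(p')' = p$ and $((p')_{-}, (p')_{+}) = ((p_{+})', (p_{-})')$.

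For part $(ii)$, I set $q(\cdot) := 1/p(\cdot)$; since $0 < p_{-} \leq p_{+} < \infty$, $q(\cdot)$ is again an exponent function with bounded exponents. The identity
\[
q(z) - q(w) \;=\; \frac{p(w) - p(z)}{p(z)\, p(w)}
\]
together with the bound $p(z)p(w) \geq p_{-}^{2} > 0$ gives $|q(z)-q(w)| \leq p_{-}^{-2}|p(z)-p(w)|$, which transfers both the local condition and (after replacing $w$ by the limit value and using continuity of $x\mapsto 1/x$ on $(0,\infty)$) the behavior at infinity, with $q_{\infty} = 1/p_{\infty}$. The converse follows identically from $p = 1/(1/p)$.

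There is no real obstacle here: the whole argument is mechanical once the two elementary algebraic identities above are written down. The only point requiring care is the identification of the limit constants, namely $(p_{\infty})' = (p')_{\infty}$ in $(i)$ and $(1/p)_{\infty} = 1/p_{\infty}$ in $(ii)$, which I would justify by continuity of the relevant scalar functions on the intervals $(1,\infty)$ and $(0,\infty)$ respectively, together with the fact that $p_{\infty}$ is the limit of $p(z)$ as $\rho(z)\to\infty$.
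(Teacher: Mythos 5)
Your proof is correct and follows essentially the same route as the paper: the paper declares part $(i)$ obvious and proves part $(ii)$ via the two-sided inequality $\left| \frac{p(z)-p(w)}{(p_{+})^{2}} \right| \leq \left| \frac{1}{p(z)} - \frac{1}{p(w)} \right| \leq \left| \frac{p(z)-p(w)}{(p_{-})^{2}} \right|$, which is exactly the algebraic reduction you carry out (you obtain the two directions by symmetry rather than by stating both bounds at once). Your treatment of the constants at infinity, using $p_{\infty} \geq p_{-}$ to keep the denominators bounded away from zero, is a correct filling-in of the details the paper omits.
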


\begin{proof} The statement $(i)$ is obvious. Now, $(ii)$ follows from the following inequality valid for all $z,w \in \mathbb{H}^{n}$
\[
\left| \frac{p(z) - p(w)}{(p_{+})^{2}} \right| \leq \left| \frac{1}{p(z)} - \frac{1}{p(w)} \right| \leq 
\left| \frac{p(z) - p(w)}{(p_{-})^{2}} \right|.
\]
\end{proof}

\section{Auxiliary results}

The following three results are crucial to get the main results of Section 6. The first two talk about the size of the $\rho$ - balls in the
$L^{p(\cdot)}$ - norm, and the last one is a supporting result.

\begin{lemma} \label{estim pp prime}
Let $p(\cdot) \in \mathcal{P}^{\log}(\mathbb{H}^{n})$ with $1 < p_{-} \leq p_{+} < \infty$. Then
\[
\| \chi_B \|_{L^{p(\cdot)}(\mathbb{H}^{n})} \| \chi_B \|_{L^{p'(\cdot)}(\mathbb{H}^{n})} \approx |B|
\]
uniformly for all $\rho$ - balls $B \subset \mathbb{H}^{n}$.
\end{lemma}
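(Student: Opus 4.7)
The equivalence $\| \chi_B \|_{L^{p(\cdot)}} \| \chi_B \|_{L^{p'(\cdot)}} \approx |B|$ should be proved by establishing the two inequalities separately, and the nontrivial half relies on a scale-dependent explicit formula for $\|\chi_B\|_{L^{p(\cdot)}}$ coming from the log-Hölder conditions.

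The lower bound $|B| \lesssim \|\chi_B\|_{L^{p(\cdot)}} \|\chi_B\|_{L^{p'(\cdot)}}$ is immediate from the generalized Hölder inequality (Lemma \ref{Holder ineq}) applied to $f = \chi_B$ and $g = \chi_B$, since $\chi_B \chi_B = \chi_B$ gives $|B| = \int_{\mathbb{H}^n} \chi_B(z)\chi_B(z)\,dz \le 2\|\chi_B\|_{L^{p(\cdot)}} \|\chi_B\|_{L^{p'(\cdot)}}$.

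The reverse bound is the content of the lemma. I would establish, as a preliminary sharp estimate, the two-scale identity
\[
\|\chi_{B(z_0,r)}\|_{L^{p(\cdot)}} \;\approx\; \begin{cases} |B(z_0,r)|^{1/p(z_0)} & \text{if } r \le 1,\\[2pt] |B(z_0,r)|^{1/p_\infty} & \text{if } r \ge 1, \end{cases}
\]
and then invoke Lemma \ref{exp prop}(i) to get the same formula for $p'(\cdot)$ with $(p')_\infty = (p_\infty)'$; multiplying the two asymptotics yields $|B|^{1/p(z_0) + 1/p'(z_0)} = |B|$ in the small-ball case and $|B|^{1/p_\infty + 1/(p_\infty)'} = |B|$ in the large-ball case. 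The two-scale estimate itself is proved by testing the Luxemburg modular at the candidate $\lambda = |B|^{1/p(z_0)}$ (respectively $\lambda = |B|^{1/p_\infty}$): for small balls the local log-Hölder condition gives $|p(z) - p(z_0)| \lesssim 1/\log(1/r)$ for every $z \in B(z_0,r)$ (using $\rho(z_0^{-1}z) < r \le 1/2$), hence
\[
\lambda^{-p(z)} = r^{-Qp(z)/p(z_0)} = r^{-Q}\, r^{-Q(p(z)-p(z_0))/p(z_0)} \approx r^{-Q},
\]
since the exponent of the second factor stays bounded; integrating over $B$ gives a modular of order $1$. For large balls one argues similarly with $p_\infty$ in place of $p(z_0)$, using $|p(z) - p_\infty| \lesssim 1/\log(e + \rho(z))$ to keep $r^{-Q(p(z)-p_\infty)/p_\infty}$ bounded uniformly.

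The main obstacle, as usual in variable-exponent analysis, is the large-ball case: one must control the oscillation of $p$ on a ball of arbitrarily large radius, and the argument genuinely uses \emph{both} the $p_\infty$ limit and the decay rate of $C_\infty/\log(e+\rho(z))$ to absorb the factors $|B|^{(p(z)-p_\infty)/p_\infty}$. A technical but routine point that needs care is splitting $B = B(z_0,r)$ for $r \ge 1$ according to whether $\rho(z_0) \lesssim r$ (in which case $\rho(z) \le \rho(z_0)+r \approx r$ uniformly on $B$ and the infinity condition alone suffices) or $\rho(z_0) \gg r$ (in which case $p$ is nearly constant on $B$ by the infinity condition applied at both $z$ and $z_0$). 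Once the two-scale estimate is in hand, the product $\|\chi_B\|_{L^{p(\cdot)}}\|\chi_B\|_{L^{p'(\cdot)}} \lesssim |B|$ is just arithmetic, completing the proof.
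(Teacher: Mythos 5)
Your overall strategy coincides with the paper's: the paper's entire proof consists of observing, via Lemma \ref{exp prop}\,(i), that $p'(\cdot) \in \mathcal{P}^{\log}(\mathbb{H}^{n})$ with $(p')_{\infty} = (p_{\infty})'$, and then invoking \cite[Lemma 4.1]{Fang}, which is exactly the two-scale estimate $\|\chi_{B(z_0,r)}\|_{L^{p(\cdot)}} \approx |B|^{1/p(z_0)}$ for $r \le 1$ and $\approx |B|^{1/p_{\infty}}$ for $r \ge 1$ that you take as your key ingredient. Your lower bound via the generalized H\"older inequality and the concluding arithmetic $1/p(z_0)+1/p'(z_0)=1$ and $1/p_{\infty}+1/(p_{\infty})'=1$ are correct and match what that citation delivers.

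The one place where you go beyond the paper --- supplying a proof of the two-scale estimate itself --- contains a genuine gap in the large-ball case. The small-ball computation is fine: there $|p(z)-p(z_0)|\log(1/r)$ is bounded by the local log-H\"older condition, so $r^{-Q(p(z)-p(z_0))/p(z_0)} \approx 1$ pointwise on $B$. But for $r \ge 1$ the analogous pointwise bound fails: on the part of $B$ where $\rho(z)$ is small compared with a power of $r$ (for instance when $B$ contains or is near the origin), the decay condition only gives $|p(z)-p_{\infty}| \le C_{\infty}/\log(e+\rho(z))$, which does not make $|p(z)-p_{\infty}|\log|B|$ bounded, so the factor $|B|^{-(p(z)-p_{\infty})/p_{\infty}}$ can be polynomially large in $|B|$ there. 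Your proposed dichotomy $\rho(z_0)\lesssim r$ versus $\rho(z_0)\gg r$ does not repair this: in the first case $\rho(z)\lesssim r$ is an \emph{upper} bound on $\rho(z)$, which is the wrong direction, since the infinity condition only helps where $\rho(z)$ is large. The standard proof of the large-ball estimate is not a pointwise absorption but an integral argument (Nekvinda's trick: $\int c^{1/|p(z)-p_{\infty}|}\,dz<\infty$ for some $c\in(0,1)$, a consequence of the log-H\"older decay), splitting $B$ according to whether $|p(z)-p_{\infty}|\log|B|$ exceeds a threshold and controlling the measure of the exceptional set. You should either carry out that argument or, as the paper does, cite the two-scale estimate directly.
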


\begin{proof}
By Lemma \ref{exp prop} - (i) we have that $p'(\cdot) \in \mathcal{P}^{\log}(\mathbb{H}^{n})$ with $(p')_{\infty} = (p_{\infty})'$, since 
$p(\cdot) \in \mathcal{P}^{\log}(\mathbb{H}^{n})$. Now, the lemma follows from \cite[Lemma 4.1]{Fang}.
\end{proof}

\begin{lemma} \label{2B}
Let $p(\cdot) \in \mathcal{P}^{\log}(\mathbb{H}^{n})$ with $1 < p_{-} \leq p_{+} < \infty$ and let $\lambda > 1$ be fixed. Then
\[
\| \chi_{\lambda B} \|_{L^{p(\cdot)}(\mathbb{H}^{n})} \approx \| \chi_B \|_{L^{p(\cdot)}(\mathbb{H}^{n})}
\]
uniformly for all $\rho$ - balls $B \subset \mathbb{H}^{n}$.
\end{lemma}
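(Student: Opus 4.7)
The plan is to reduce to Lemma \ref{estim pp prime} (the $pp'$ estimate), exploiting the fact that that lemma holds under \emph{the same} hypotheses as the one we are asked to prove, namely $p(\cdot)\in\mathcal{P}^{\log}(\mathbb{H}^n)$ with $1<p_{-}\leq p_{+}<\infty$. The direction $\|\chi_B\|_{L^{p(\cdot)}}\lesssim\|\chi_{\lambda B}\|_{L^{p(\cdot)}}$ is trivial: since $\lambda>1$ we have $\chi_B\leq\chi_{\lambda B}$, and monotonicity of the Luxemburg norm (immediate from the definition) gives $\|\chi_B\|_{L^{p(\cdot)}}\leq\|\chi_{\lambda B}\|_{L^{p(\cdot)}}$.

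For the reverse direction $\|\chi_{\lambda B}\|_{L^{p(\cdot)}}\lesssim\|\chi_B\|_{L^{p(\cdot)}}$, I would apply Lemma \ref{estim pp prime} twice, once to $B$ and once to $\lambda B$. This gives
\[
\|\chi_B\|_{L^{p(\cdot)}}\,\|\chi_B\|_{L^{p'(\cdot)}}\approx |B|,\qquad
\|\chi_{\lambda B}\|_{L^{p(\cdot)}}\,\|\chi_{\lambda B}\|_{L^{p'(\cdot)}}\approx |\lambda B|=\lambda^{Q}|B|,
\]
with implicit constants depending only on $p(\cdot)$, not on $B$ or $\lambda$. Dividing the second estimate by the first yields
\[
\frac{\|\chi_{\lambda B}\|_{L^{p(\cdot)}}}{\|\chi_B\|_{L^{p(\cdot)}}}\approx \lambda^{Q}\,\frac{\|\chi_B\|_{L^{p'(\cdot)}}}{\|\chi_{\lambda B}\|_{L^{p'(\cdot)}}}.
\]
Now the monotonicity of the norm, applied this time to $p'(\cdot)$ (which satisfies $1<(p')_{-}\leq(p')_{+}<\infty$ because $1<p_{-}\leq p_{+}<\infty$), gives $\|\chi_B\|_{L^{p'(\cdot)}}\leq\|\chi_{\lambda B}\|_{L^{p'(\cdot)}}$, so the ratio on the right is at most $1$. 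Hence $\|\chi_{\lambda B}\|_{L^{p(\cdot)}}\lesssim \lambda^{Q}\,\|\chi_B\|_{L^{p(\cdot)}}$, and since $\lambda$ is fixed, the factor $\lambda^{Q}$ is absorbed into the implicit constant.

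There is no real obstacle: the argument is a direct two-line consequence of Lemma \ref{estim pp prime} plus monotonicity, and the constants are uniform in $B$ because Lemma \ref{estim pp prime} itself is uniform in $B$. Note that the bound $\lambda^{Q}$ we obtain is not sharp (the correct scaling one would get from a direct computation via log-H\"older is $\lambda^{Q/p(z_B)}$ or $\lambda^{Q/p_{\infty}}$, depending on the radius of $B$), but the statement only claims $\approx$ with a constant allowed to depend on $\lambda$, so this crude bound suffices.
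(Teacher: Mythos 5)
Your proof is correct and follows essentially the same route as the paper: the easy direction by monotonicity of the Luxemburg norm, and the reverse direction by combining Lemma \ref{estim pp prime} for $\lambda B$ with monotonicity in the $p'(\cdot)$-norm and the lower bound $|B|\lesssim\|\chi_B\|_{L^{p(\cdot)}}\|\chi_B\|_{L^{p'(\cdot)}}$ (which the paper obtains by quoting H\"older's inequality directly and you obtain as the lower half of Lemma \ref{estim pp prime} applied to $B$ --- the same estimate). The only cosmetic difference is that you phrase the conclusion as a ratio identity before bounding it, whereas the paper writes the three inequalities in a single chain.
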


\begin{proof}
By the order preserving property of the norm $\| \cdot \|_{L^{p(\cdot)}}$ we have that
\begin{equation} \label{estim B2B}
\| \chi_{B} \|_{L^{p(\cdot)}} \leq \| \chi_{\lambda B} \|_{L^{p(\cdot)}}.
\end{equation}
On the other hand, by Lemma \ref{estim pp prime}, (\ref{estim B2B}) above, and H\"older's inequality applied to $|B| = \int \chi_B(z) dz$, result
\[
\| \chi_{\lambda B} \|_{L^{p(\cdot)}} \leq C_{\lambda} |B| \| \chi_{\lambda B} \|_{L^{p'(\cdot)}}^{-1} \leq C_{\lambda} |B| 
\| \chi_{B} \|_{L^{p'(\cdot)}}^{-1} \leq C_{\lambda}  \| \chi_{B} \|_{L^{p(\cdot)}}.
\]
This completes the proof.
\end{proof}

The following result is an adaptation of \cite[Lemma 5.4]{Ho1} to our setting.

\begin{proposition} \label{b_k functions}
Let $q(\cdot) : \mathbb{H}^{n} \to (0, \infty)$ such that $q(\cdot) \in \mathcal{P}^{\log}(\mathbb{H}^{n})$ and 
$0 < q_{-} \leq q_{+} < \infty$. Let $s > 1$ and $0 < q_{*} < \underline{q}$ such that $s q_{*} > q_{+}$ and let $\{ b_k \}_{k=1}^{\infty}$ be a sequence of nonnegative functions in $L^{s}(\mathbb{H}^{n})$ such that each $b_k$ is supported in a $\rho$ - ball 
$B_k \subset \mathbb{H}^{n}$ and
\begin{equation} \label{bks}
\| b_k \|_{L^{s}(\mathbb{H}^{n})} \leq A_k |B_k|^{1/s},
\end{equation}
where $A_k >0$ for all $k \geq 1$. Then, for any sequence of nonnegative numbers $\{ \lambda_k \}_{k=1}^{\infty}$ we have
\[
\left\| \sum_{k=1}^{\infty} \lambda_k b_k \right\|_{L^{q(\cdot)/q_{*}}(\mathbb{H}^{n})} \leq C \left\| \sum_{k=1}^{\infty} A_k \lambda_k \chi_{B_k} \right\|_{L^{q(\cdot)/q_{*}}(\mathbb{H}^{n})},
\]
where $C$ is a positive constant which does not depend on $\{ b_k \}_{k=1}^{\infty}$, $\{ A_k \}_{k=1}^{\infty}$, and 
$\{ \lambda_k \}_{k=1}^{\infty}$.
\end{proposition}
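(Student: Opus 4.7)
The plan is to dualize via Proposition \ref{norma equivalente} and reduce matters to the $L^{p(\cdot)}$-boundedness of the Hardy--Littlewood maximal operator $M$ on $\mathbb{H}^{n}$ applied to a rescaled exponent. Put $r(\cdot) = q(\cdot)/q_{*}$. Since $q_{*} < \underline{q} \leq q_{-}$, one has $r(\cdot) \in \mathcal{P}^{\log}(\mathbb{H}^{n})$ with $r_{-} > 1$; by Lemma \ref{exp prop} the conjugate $r'(\cdot)$ also belongs to $\mathcal{P}^{\log}(\mathbb{H}^{n})$. Thus Proposition \ref{norma equivalente} reduces the claim to the uniform estimate
\[
\sum_{k} \lambda_{k} \int_{B_{k}} b_{k}(z) g(z) \, dz \lesssim \Bigl\| \sum_{k} \lambda_{k} A_{k} \chi_{B_{k}} \Bigr\|_{L^{r(\cdot)}(\mathbb{H}^{n})}
\]
holding for every nonnegative $g$ with $\| g \|_{L^{r'(\cdot)}(\mathbb{H}^{n})} \leq 1$.

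Each individual pairing should be controlled by a maximal average. Applying classical H\"older with exponents $s, s'$ together with the size condition (\ref{bks}) gives
\[
\int_{B_{k}} b_{k} g \leq \| b_{k} \|_{L^{s}} \Bigl( \int_{B_{k}} g^{s'} \Bigr)^{1/s'} \leq A_{k} |B_{k}| \Bigl( \frac{1}{|B_{k}|} \int_{B_{k}} g^{s'} \Bigr)^{1/s'} \lesssim A_{k} \int_{B_{k}} M_{s'}(g)(z) \, dz,
\]
where $M_{s'}(g) := \bigl( M(g^{s'}) \bigr)^{1/s'}$; the last inequality uses that for every $z \in B_{k}$ the value $M_{s'}(g)(z)$ controls, up to an absolute dimensional constant coming from the doubling of the Koranyi balls, the sharp average $(|B_{k}|^{-1} \int_{B_{k}} g^{s'})^{1/s'}$. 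Summing in $k$, exchanging sum and integral, and invoking the variable-exponent H\"older inequality (Lemma \ref{Holder ineq}) yields
\[
\sum_{k} \lambda_{k} \int b_{k} g \lesssim \Bigl\| \sum_{k} \lambda_{k} A_{k} \chi_{B_{k}} \Bigr\|_{L^{r(\cdot)}(\mathbb{H}^{n})} \, \| M_{s'}(g) \|_{L^{r'(\cdot)}(\mathbb{H}^{n})}.
\]

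To finish, I would rewrite $\| M_{s'}(g) \|_{L^{r'(\cdot)}} = \| M(g^{s'}) \|_{L^{r'(\cdot)/s'}}^{1/s'}$ using Lemma \ref{potencia s}(iv), and then apply the $L^{p(\cdot)}$-boundedness of $M$ on $\mathbb{H}^{n}$ to the scaled exponent $r'(\cdot)/s'$. This gives $\| M_{s'}(g) \|_{L^{r'(\cdot)}} \lesssim \| g \|_{L^{r'(\cdot)}} \leq 1$, after which taking the supremum over $g$ finishes the proof. The main obstacle is the exponent bookkeeping: one must verify that $(r'(\cdot)/s')_{-} > 1$ and that this scaled exponent still lies in $\mathcal{P}^{\log}(\mathbb{H}^{n})$. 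Since $r_{+} = q_{+}/q_{*}$, a direct computation shows $(r'(\cdot)/s')_{-} = (r_{+})'/s'$ exceeds $1$ precisely when $s q_{*} > q_{+}$, which is one of the standing hypotheses; this tight match explains why the conditions $q_{*} < \underline{q}$ and $s q_{*} > q_{+}$ are imposed together.
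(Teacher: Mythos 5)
Your argument is correct and is essentially the paper's own proof: both dualize via Proposition \ref{norma equivalente}, bound each pairing $\int b_k g$ by $A_k \int_{B_k} [M(|g|^{s'})]^{1/s'}$ using classical H\"older and (\ref{bks}), apply the variable-exponent H\"older inequality, and conclude with the boundedness of $M$ on $L^{(q(\cdot)/q_*)'/s'}$, which requires exactly the condition $s q_* > q_+$ that you identify. The only cosmetic difference is the order of steps (you dualize first, the paper dualizes at the end), and your passing remark about a doubling constant is unnecessary since the uncentered maximal function dominates the average over $B_k$ with constant one.
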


\begin{proof}
Given $g \in L^{1}_{loc}(\mathbb{H}^{n})$, by (\ref{bks}) and H\"older's inequality, we have 
\[
\int_{\mathbb{H}^{n}} b_k(z) |g(z)| dz \leq \| b_k \|_{L^{s}} \| \chi_{B_k} \, g \|_{L^{s'}}
\leq A_k |B_k|^{1/s} \left( \int_{B_k} |g(w)|^{s'} dw \right)^{1/s'}
\]
\[
= A_k |B_k| \left( \frac{1}{|B_k|} \int_{B_k} |g(w)|^{s'} dw \right)^{1/s'}
\]
\[
= A_k \int_{\mathbb{H}^{n}} \left( \frac{1}{|B_k|} \int_{B_k} |g(w)|^{s'} dw \right)^{1/s'} \chi_{B_k}(z) dz
\]
\[
\leq A_k \int_{B_k} \left[ M(|g|^{s'})(z) \right]^{1/s'} dz.
\]
So
\begin{equation} \label{series estimate}
\int_{\mathbb{H}^{n}} \left( \sum_{k} \lambda_k b_k (z) \right) |g(z)| dz \leq \sum_{k} A_k \lambda_k \int_{B_k} 
\left[ M(|g|^{s'})(z) \right]^{1/s'} dz
\end{equation}
\[
= \int_{\mathbb{H}^{n}} \left( \sum_{k} A_k \lambda_k \chi_{B_k}(z) \right) \left[ M(|g|^{s'})(z) \right]^{1/s'} dz,
\]
\[
\lesssim \left\| \sum_{k} A_k \lambda_k \chi_{B_k} \right\|_{L^{q(\cdot)/q_{*}}} 
\left\| M(|g|^{s'}) \right\|_{L^{(q(\cdot)/q_{*})'/s'}}^{1/s'},
\]
where the second inequality follows from Lemmas \ref{Holder ineq} and \ref{potencia s}. Now, it is clear that 
\[
1 < \frac{q_{-}}{q_{*}} = \left(\frac{q(\cdot)}{q_{*}} \right)_{-} \leq \frac{q(\cdot)}{q_{*}} \leq
\left(\frac{q(\cdot)}{q_{*}} \right)_{+} = \frac{q_{+}}{q_{*}} < s,
\] 
and so
\[
1 < s' < \left( \left(\frac{q(\cdot)}{q_{*}} \right)' \right)_{-} \leq \left(\frac{q(\cdot)}{q_{*}} \right)' \leq
\left( \left(\frac{q(\cdot)}{q_{*}} \right)' \right)_{+} = \frac{q_{-}}{q_{-} - q_{*}} < \infty.
\]
Since $q(\cdot)/q_{*} \in \mathcal{P}^{\log}(\mathbb{H}^{n})$ we have that 
$\left( q(\cdot)/q_{*} \right)'/s' \in \mathcal{P}^{\log}(\mathbb{H}^{n})$ with 
$(\left( q(\cdot)/q_{*} \right)')_{-}/s' > 1$. Then, by Lemma \ref{exp prop} - (ii), \cite[Theorem 1.4 and 1.7]{Adam}, \cite[Chapter I, 2.5 and Theorem 1]{Stein3}, 
(\ref{series estimate}) and Lemma \ref{potencia s}, it follows that
\begin{equation} \label{series estimate 2}
\int_{\mathbb{H}^{n}} \left( \sum_{k} \lambda_k b_k (z) \right) |g(z)| dz
\lesssim \left\| \sum_{k} A_k \lambda_k \chi_{B_k} \right\|_{L^{q(\cdot)/q_{*}}} 
\left\| g \right\|_{L^{(q(\cdot)/q_{*})'}},
\end{equation}
for all $g \in L^{(q(\cdot)/q_{*})'}$. Finally, by taking the supremum over all $g$ with $\left\| g \right\|_{L^{(q(\cdot)/q_{*})'}} \leq 1$ in (\ref{series estimate 2}), the proposition follows from Proposition \ref{norma equivalente}.
\end{proof}

\begin{remark}
Proposition \ref{b_k functions} still holds if one considers a sequence of complex functions $b_k : \mathbb{H}^{n} \to \mathbb{C}$ supported 
on $\rho$ - balls $B_k$ where the family $\{ B_k \}$ of all these balls satisfies the bounded intersection property.
\end{remark}

\section{Fractional maximal operator}

We recall that the homogeneous dimension of $\mathbb{H}^{n}$ is $Q = 2n+2$. For $0 < \alpha < Q$, we define the fractional maximal 
operator $M_{\alpha}$ by
\[
M_{\alpha}f(z) = \sup_{B \ni z} |B|^{\frac{\alpha}{Q} - 1}\int_{B} |f(w)| \, dw,
\]
where $f$ is a locally integrable function on $\mathbb{H}^{n}$ and the supremum is taken over all the $\rho$ - balls $B$ containing $z$. For $\alpha=0$, we have that $M_0 = M$, where $M$ is the {\it Hardy-Littlewood maximal operator} on $\mathbb{H}^{n}$. 

A measurable function $\omega : \mathbb{H}^{n} \to \mathbb{R}$ is called a weight if $\omega(z) > 0$ a.e. $z \in \mathbb{H}^{n}$ and 
$\omega$ is locally
integrable.

Let $ p \in \mathbb{R} \setminus \{ 0 \}$ and $0 < s < \infty$. Given a weight $\omega$ and a measurable set $E \subset \mathbb{H}^{n}$, we write
\[
[\omega^{p}(E)]^{s} = \left( \int_{E} [\omega(z)]^{p} \, dz \right)^{s}.
\] 

We say that a weight $\omega$ belongs to the class $\mathcal{A}_1$ if there exists a positive constant $C$ such that
\[
(M \omega)(z) \leq C \omega(z), \,\,\, \text{a.e.} \,\, z \in \mathbb{H}^{n}.
\]

Given a weight $\omega$ and $p > 1$, set $\sigma := \omega^{-1/(p-1)}$. We say that $\omega$ belongs to the class $\mathcal{A}_p$ if 
\[
[\omega]_{\mathcal{A}_p} : = \sup_B \frac{\omega(B) [\sigma(B)]^{p-1}}{|B|^{p}} < \infty,
\]
where the supremum is taken over all the $\rho$ - balls $B$ of $\mathbb{H}^{n}$.

A weight $\omega$ satisfies the {\it reverse doubling condition} (RD) if there exist $0 < \alpha, \beta < 1$ such that whenever 
$|B(z, \delta_1)| \leq \alpha |B(z, \delta_2)|$, $0 < \delta_1 < \delta_2$, we have 
$\omega(B(z, \delta_1)) \leq \beta \omega(B(z, \delta_2))$.

The proofs of the following two lemmas are analogous to that of Euclidean case, so we omit the proofs.

\begin{lemma} \label{prop Ap}
Let $1 \leq s < p < \infty$ and $0 < r < 1$. Then

(i) if $\omega \in \mathcal{A}_1$, then $\omega^{r} \in \mathcal{A}_1$;

(ii)  $\omega \in \mathcal{A}_p$ if and only if $\sigma \in \mathcal{A}_{p'}$, where $\frac{1}{p} + \frac{1}{p'} =1$;

(iii) $\mathcal{A}_{s} \subset \mathcal{A}_{p}$;

(iv) if $\omega \in \mathcal{A}_s$, then $\omega$ satisfies the (RD) condition.
\end{lemma}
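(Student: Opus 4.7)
My approach would be to simply mimic the classical Euclidean proofs, since $\mathbb{H}^{n}$ equipped with the Koranyi quasi-metric $\rho$ and Haar measure is a space of homogeneous type, the Hardy-Littlewood maximal operator obeys the weak $(1,1)$ bound, and the Lebesgue differentiation theorem holds. Consequently every ingredient of Muckenhoupt's $\mathcal{A}_p$ theory transfers verbatim. I would treat the four items in the order (ii), (iii), (i), (iv), since the later parts reuse the earlier ones.

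For (ii), I would compute directly from the definition. The key algebraic identity is that $1/p+1/p' = 1$ forces $(p-1)(p'-1) = 1$, hence $\sigma^{-1/(p'-1)} = \omega^{-(p-1)/(p-1)} = \omega$; raising the bracket $\omega(B)\sigma(B)^{p-1}/|B|^{p}$ to the power $1/(p-1)$ and taking the supremum over $B$ gives $[\sigma]_{\mathcal{A}_{p'}}=[\omega]_{\mathcal{A}_p}^{1/(p-1)}$, yielding the stated equivalence. For (iii) with $s>1$, writing $\sigma_p=\omega^{-1/(p-1)}$ and $\sigma_s=\omega^{-1/(s-1)}$ and noting that $(s-1)/(p-1)\in(0,1)$, I would apply Jensen's inequality to the concave map $t\mapsto t^{(s-1)/(p-1)}$ to get $(\sigma_p(B)/|B|)^{p-1}\le(\sigma_s(B)/|B|)^{s-1}$, from which $[\omega]_{\mathcal{A}_p}\le[\omega]_{\mathcal{A}_s}$ is immediate; the case $s=1$ follows by combining $\omega(B)/|B|\lesssim\essinf_B\omega$ with $\sigma(B)/|B|\le(\essinf_B\omega)^{-1/(p-1)}$ and multiplying. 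For (i), since $r\in(0,1)$ the map $t\mapsto t^r$ is concave, so Jensen gives
\[
\frac{1}{|B|}\int_B \omega^r \le \left(\frac{1}{|B|}\int_B \omega\right)^r \le (M\omega(z))^r\le (C\omega(z))^r
\]
for every ball $B\ni z$, hence $M(\omega^r)\le C^r\omega^r$ a.e.

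Part (iv) is the most substantial, but it admits a short direct proof without invoking reverse Hölder. By (iii) it suffices to treat $\omega\in\mathcal{A}_p$ with $p>1$. The crucial observation is the following standard $\mathcal{A}_p$ inequality: for every measurable $E\subset B$,
\[
\left(\frac{|E|}{|B|}\right)^{p} \le [\omega]_{\mathcal{A}_p}\,\frac{\omega(E)}{\omega(B)},
\]
which is obtained by writing $|E|=\int_E \omega^{1/p}\omega^{-1/p}$ and applying Hölder's inequality with exponents $p$ and $p'$, using $\omega^{-p'/p}=\sigma$. Applying this with $B=B(z,\delta_2)$ and $E=B(z,\delta_2)\setminus B(z,\delta_1)$, and noting that $|B_1|\le\alpha|B_2|$ forces $|E|/|B|\ge 1-\alpha$, one obtains
\[
\frac{\omega(B(z,\delta_1))}{\omega(B(z,\delta_2))}\le 1-\frac{(1-\alpha)^p}{[\omega]_{\mathcal{A}_p}},
\]
so choosing $\alpha$ close enough to $0$ makes the right-hand side a constant $\beta\in(0,1)$, giving the (RD) condition. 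The only point requiring care is to verify that the Hölder step behaves correctly against the quasi-metric structure of $\mathbb{H}^{n}$, but this is purely a measure-theoretic computation and causes no real obstacle.
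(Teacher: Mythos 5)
Your proposal is correct and is exactly the argument the paper has in mind: the paper omits the proof of this lemma entirely, stating only that it is ``analogous to that of Euclidean case,'' and your write-up supplies precisely that classical Muckenhoupt-theory argument, which transfers to $(\mathbb{H}^{n},\rho,dz)$ since it is a space of homogeneous type where the weak $(1,1)$ maximal bound and Lebesgue differentiation hold. The only blemish is a sign typo in part (ii), where $\sigma^{-1/(p'-1)}=\sigma^{-(p-1)}=\omega^{+(p-1)/(p-1)}=\omega$ (you wrote the exponent as $-(p-1)/(p-1)$ but drew the correct conclusion), and it is worth noting that in (iv) one should first reduce the case $s=1$ to some $\mathcal{A}_p$ with $p>1$ via (iii), as you implicitly do.
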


Given $1 < p \leq q < \infty$, we say that a weight $\omega$ belongs to the class $\mathcal{A}_{p, q}$ if 
\begin{equation} \label{cte Apq}
[\omega]_{\mathcal{A}_{p, q}} := \sup_B \frac{[\omega^{q}(B)]^{\frac{1}{q}} [\omega^{-p'}(B)]^{\frac{1}{p'}}}{|B|^{\frac{1}{q} + \frac{1}{p'}}} < \infty,
\end{equation}
where the supremum is taken over all the $\rho$ - balls $B$ of $\mathbb{H}^{n}$.

\begin{lemma} \label{prop Apq}
Let $1 < p \leq q < \infty$, then

(i) $\omega \in \mathcal{A}_{p, q}$ if and only if $\omega^{q} \in \mathcal{A}_{1+ q/p'}$;

(ii) $\omega \in \mathcal{A}_{p, q}$ if and only if $\omega^{-1} \in \mathcal{A}_{q', p'}$;

(iii) $\omega \in \mathcal{A}_{p, q}$ if and only if $\omega^{-p'} \in \mathcal{A}_{1+ p'/q}$;

(iv) if $\omega \in \mathcal{A}_{1}$, then $\omega^{1/q} \in \mathcal{A}_{p, q}$.
\end{lemma}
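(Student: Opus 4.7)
The plan is to prove all four items by directly unpacking the defining constants and chaining the simpler claims; no Muckenhoupt theory beyond the definition itself is needed. The only substantive work is tracking the exponent arithmetic, and I do not foresee any real analytic obstacle.

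For (i), I would set $r = 1 + q/p'$ and note $r-1 = q/p'$, so the dual weight associated to $\omega^{q}$ in the definition of $\mathcal{A}_{r}$ is $(\omega^{q})^{-1/(r-1)} = \omega^{-p'}$. Substituting,
\[
[\omega^{q}]_{\mathcal{A}_{r}} \;=\; \sup_{B} \frac{\omega^{q}(B)\,[\omega^{-p'}(B)]^{q/p'}}{|B|^{\,1 + q/p'}},
\]
and raising this to the power $1/q$ matches $[\omega]_{\mathcal{A}_{p,q}}$ because $(1 + q/p')/q = 1/q + 1/p'$. For (ii), I would simply plug $\omega^{-1}$ with exponents $(q',p')$ into the definition (\ref{cte Apq}); using $(p')' = p$ and $(q')' = q$, one of the factors becomes $[\omega^{-p'}(B)]^{1/p'}$ and the other $[\omega^{q}(B)]^{1/q}$, and the denominator becomes $|B|^{1/p' + 1/q}$, which is literally $[\omega]_{\mathcal{A}_{p,q}}$.

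Having (i) and (ii), statement (iii) falls out by composition: by (ii), $\omega \in \mathcal{A}_{p,q}$ iff $\omega^{-1} \in \mathcal{A}_{q',p'}$, and then (i) applied with $(q',p')$ in place of $(p,q)$ gives the equivalence with $(\omega^{-1})^{p'} = \omega^{-p'} \in \mathcal{A}_{1 + p'/(q')'} = \mathcal{A}_{1 + p'/q}$. For (iv), I use (i) in the forward direction: it suffices to check that $(\omega^{1/q})^{q} = \omega \in \mathcal{A}_{1 + q/p'}$, and since $1 + q/p' > 1$, this follows at once from $\omega \in \mathcal{A}_{1}$ and the inclusion $\mathcal{A}_{1} \subset \mathcal{A}_{1 + q/p'}$ supplied by Lemma \ref{prop Ap}(iii). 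The only place to exercise care is the exponent chasing in (iii), where one must remember $(q')' = q$ before applying (i).
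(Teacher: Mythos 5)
Your proposal is correct, and it is precisely the standard exponent-bookkeeping argument from the Euclidean setting that the paper invokes when it omits the proof ("analogous to that of the Euclidean case"); all four verifications check out, including the reduction of (iii) to (i) and (ii) via $(q')'=q$ and the use of Lemma \ref{prop Ap}(iii) in (iv).
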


\begin{proposition} \label{fractional max}
Let $0 < \alpha < Q$, $1 < p < \frac{Q}{\alpha}$ and $\frac{1}{q} = \frac{1}{p} - \frac{\alpha}{Q}$. If $\omega \in \mathcal{A}_{p,q}$, then
\[
\left(\int_{\mathbb{H}^{n}} [M_{\alpha}f(z)]^{q} [\omega(z)]^{q} dz \right)^{1/q} \leq C 
\left(\int_{\mathbb{H}^{n}} |f(z)|^{p} [\omega(z)]^{p} dz \right)^{1/p}.
\]
\end{proposition}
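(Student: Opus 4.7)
I would follow the classical Muckenhoupt--Wheeden strategy, adapted to the Heisenberg group: first prove a weak-type $(p,q)$ estimate by H\"older's inequality together with a Vitali-type covering, and then upgrade this to the strong bound via Marcinkiewicz interpolation with change of measure, using the openness of the class $\mathcal{A}_{p,q}$.

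\emph{Step 1 (weak-type bound).} Fix $\lambda>0$ and set $E_\lambda=\{z\in\mathbb{H}^n:M_\alpha f(z)>\lambda\}$. For each $z\in E_\lambda$ I would select a $\rho$-ball $B^z\ni z$ with $|B^z|^{\alpha/Q-1}\int_{B^z}|f|>\lambda$; a Vitali-type argument in the homogeneous-type space $\mathbb{H}^n$ then extracts a disjoint subfamily $\{B_i\}$ with $E_\lambda\subset\bigcup_i\tau B_i$ for some fixed $\tau>1$. Since $\omega^q\in\mathcal{A}_{1+q/p'}$ by Lemma~\ref{prop Apq}(i), the weight $\omega^q$ is doubling and $\omega^q(E_\lambda)\lesssim\sum_i\omega^q(B_i)$. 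Writing $\sigma=\omega^{-p'}$, H\"older's inequality applied to $\int_{B_i}|f|\omega\cdot\omega^{-1}$ together with the $\mathcal{A}_{p,q}$-condition (\ref{cte Apq}) yield respectively
\[
\lambda|B_i|^{1-\alpha/Q}<\Bigl(\int_{B_i}|f|^p\omega^p\Bigr)^{\!1/p}\!\sigma(B_i)^{1/p'}, \qquad \omega^q(B_i)^{1/q}\sigma(B_i)^{1/p'}\leq C|B_i|^{1/q+1/p'}.
\]
Eliminating $\sigma(B_i)$ and using the identity $1/q+1/p'=1-\alpha/Q$, a short computation gives $\omega^q(B_i)\leq C\lambda^{-q}\bigl(\int_{B_i}|f|^p\omega^p\bigr)^{q/p}$. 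Since $q/p\geq 1$, the nesting $\|\cdot\|_{\ell^{q/p}}\leq\|\cdot\|_{\ell^1}$ and the disjointness of the $B_i$ then produce
\[
\omega^q(E_\lambda)\lesssim\lambda^{-q}\sum_i\Bigl(\int_{B_i}|f|^p\omega^p\Bigr)^{\!q/p}\leq\lambda^{-q}\Bigl(\int_{\mathbb{H}^n}|f|^p\omega^p\Bigr)^{\!q/p}.
\]

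\emph{Step 2 (strong type, and main obstacle).} By Lemma~\ref{prop Apq}(i), $\omega\in\mathcal{A}_{p,q}$ is equivalent to $\omega^q\in\mathcal{A}_{1+q/p'}$, and the reverse H\"older inequality for $\mathcal{A}_r$-weights -- which holds on $\mathbb{H}^n$ as a space of homogeneous type -- makes the class $\mathcal{A}_{p,q}$ open: there exist $p_0<p<p_1$ and corresponding $q_j$ with $1/q_j=1/p_j-\alpha/Q$ such that $\omega\in\mathcal{A}_{p_j,q_j}$ for $j=0,1$. Step~1 then supplies weak-type bounds for $M_\alpha$ at both endpoints, and Marcinkiewicz interpolation with change of measure delivers the strong $(p,q)$ bound. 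The principal technical point is the openness step, where $p$, $q$, and the exponent on $\omega$ in the $\mathcal{A}_r$-characterization must all be perturbed simultaneously while preserving the Sobolev relation $1/q_j-1/p_j=-\alpha/Q$; this is an exponent-bookkeeping exercise around $r=1+q/p'$, after which the argument is a direct transcription of the Euclidean proof.
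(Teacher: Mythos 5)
Your argument is correct in outline, but it is a genuinely different proof from the one in the paper. The paper disposes of Proposition \ref{fractional max} in a few lines by reducing it to a known two-weight criterion for fractional maximal functions on spaces of homogeneous type (Theorem 3.1 of Gogatishvili--Kokilashvili): one only checks that $\sigma=\omega^{-p'}$ satisfies the reverse doubling condition (via Lemma \ref{prop Apq}(iii) and Lemma \ref{prop Ap}(iv)) and that \eqref{cte Apq} holds, and the cited theorem does the rest. You instead reconstruct the classical Muckenhoupt--Wheeden proof: your Step 1 is the standard weak-type argument and the exponent arithmetic there ($1/q+1/p'=1-\alpha/Q$, then $\ell^{q/p}\subset\ell^1$ with disjointness) is correct; your Step 2 is the usual upgrade by openness plus Stein--Weiss/Marcinkiewicz interpolation with change of measures, and the openness along the Sobolev line does go through -- e.g.\ for $p_0<p$ one writes $\omega^{q_0}=(\omega^q)^{q_0/q}$ and uses reverse H\"older together with the facts that $u\in\mathcal{A}_r$ implies $u^{1+\delta}\in\mathcal{A}_r$ and $u^{\theta}\in\mathcal{A}_{\theta r+1-\theta}$ for $0<\theta<1$, all available on $\mathbb{H}^n$ as a space of homogeneous type. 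Two technical points you should make explicit if you write this up: the Vitali selection requires the radii of the chosen balls to be bounded, so one should first work with the truncated operator $M_\alpha^{R}$ and let $R\to\infty$ by monotone convergence; and the interpolation step must be the weak-type (Marcinkiewicz) version of interpolation with change of measures, which applies to the sublinear $M_\alpha$, with the intermediate weights checked to recombine to $\omega^{p}\,dz$ and $\omega^{q}\,dz$ (they do, since $(\omega^{p_0})^{p(1-\theta)/p_0}(\omega^{p_1})^{p\theta/p_1}=\omega^{p}$). The trade-off is clear: the paper's route is short but leans on a specialized external theorem, while yours is self-contained modulo the standard $\mathcal{A}_r$ theory (reverse H\"older, covering lemmas) on the Heisenberg group and keeps the mechanism of the inequality visible.
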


\begin{proof}
Let $1 < p < \frac{Q}{\alpha}$, it is easy to check that $-p' = p(1-p')$. If $\omega \in \mathcal{A}_{p,q}$, from 
Lemma \ref{prop Apq}-(iii), we have $\omega^{p(1-p')} \in \mathcal{A}_{1+ p'/q}$. From Lemma \ref{prop Ap}-(iv) it follows that
$\omega^{p(1-p')}$ satisfies the (RD) condition. So, by taking into account that $\omega$ satisfies (\ref{cte Apq}), the proposition follows to apply \cite[Theorem 3.1]{Goga} with $\gamma = \frac{\alpha}{Q}$, $\omega^{p(1-p')}$ instead of $\omega^{1-p'}$, $\nu = \omega^{q}$, and 
$\mu$ being the Haar measure on $\mathbb{H}^{n}$.
\end{proof}

The following result is an "off-diagonal" version of the Fefferman-Stein vector-valued maximal inequality for the fractional maximal operator on Heisenberg group. 

\begin{theorem} \label{Feff-Stein ineq}
Let $0 \leq \alpha < Q$, $1 < r < \infty$, and let $p(\cdot) \in \mathcal{P}^{\log}(\mathbb{H}^{n})$ with 
$1 < p_{-} \leq p_{+} < \frac{Q}{\alpha}$. If $\frac{1}{q(\cdot)} := \frac{1}{p(\cdot)} - \frac{\alpha}{Q}$, then
\begin{equation} \label{maximal fract ineq}
\left\| \left( \sum_{j=1}^{\infty} (M_{\alpha} f_j)^{r} \right)^{1/r} \right\|_{L^{q(\cdot)} (\mathbb{H}^{n})} \lesssim 
\left\| \left( \sum_{j=1}^{\infty} |f_j|^{r} \right)^{1/r} \right\|_{L^{p(\cdot)} (\mathbb{H}^{n})},
\end{equation}
holds for all sequences of bounded measurable functions with compact support $\{ f_j \}_{j=1}^{\infty}$.
\end{theorem}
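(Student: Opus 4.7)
The plan is to deduce the inequality (\ref{maximal fract ineq}) from the scalar weighted estimate of Proposition \ref{fractional max} by two successive applications of Rubio de Francia extrapolation transposed to the Heisenberg setting. Since $(\mathbb{H}^{n}, \rho, dz)$ is a space of homogeneous type carrying a full Muckenhoupt theory and a maximal operator $M$ bounded on $L^{p}(\omega)$ whenever $\omega \in \mathcal{A}_{p}$, the classical extrapolation theorems extend to $\mathbb{H}^{n}$ with purely formal modifications.

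The first stage upgrades the scalar weighted inequality to a vector-valued weighted one. Starting from
\[
\| M_{\alpha} f \|_{L^{q}(\omega^{q} \, dz)} \lesssim \| f \|_{L^{p}(\omega^{p} \, dz)}, \qquad \omega \in \mathcal{A}_{p,q},
\]
established in Proposition \ref{fractional max} for every pair $1 < p < Q/\alpha$ with $1/q = 1/p - \alpha/Q$, I would invoke the off-diagonal Rubio de Francia extrapolation scheme (as developed in the monograph of Cruz-Uribe, Martell and P\'erez, \emph{Weights, Extrapolation and the Theory of Rubio de Francia}) applied to the family of pairs
\[
\mathcal{F}_{r} = \Bigl\{\Bigl(\Bigl(\sum_{j} |f_{j}|^{r}\Bigr)^{\!1/r}, \Bigl(\sum_{j} (M_{\alpha} f_{j})^{r}\Bigr)^{\!1/r}\Bigr)\Bigr\}.
\]
This yields the weighted $\ell^{r}$-valued inequality
\[
\Bigl\| \Bigl(\sum_{j} (M_{\alpha} f_{j})^{r}\Bigr)^{\!1/r} \Bigr\|_{L^{q}(\omega^{q} \, dz)} \lesssim \Bigl\| \Bigl(\sum_{j} |f_{j}|^{r}\Bigr)^{\!1/r} \Bigr\|_{L^{p}(\omega^{p} \, dz)}
\]
for every $1 < p < Q/\alpha$, $1/q = 1/p - \alpha/Q$, $1 < r < \infty$, and every $\omega \in \mathcal{A}_{p,q}$.

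The second stage transfers the weighted $L^{p}$-$L^{q}$ bound just obtained to the variable exponent spaces $L^{p(\cdot)}$ and $L^{q(\cdot)}$ by means of the variable exponent extrapolation theorem of Cruz-Uribe and Fiorenza in its off-diagonal form. The hypothesis $p(\cdot) \in \mathcal{P}^{\log}(\mathbb{H}^{n})$ with $1 < p_{-} \leq p_{+} < Q/\alpha$ combined with Lemma \ref{exp prop}-(ii) ensures that $1/q(\cdot) \in \mathcal{P}^{\log}(\mathbb{H}^{n})$, so both exponents fall in the admissible range. Applying the theorem to the same family $\mathcal{F}_{r}$ then delivers exactly (\ref{maximal fract ineq}).

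The principal obstacle is the lack of off-the-shelf references for these two extrapolation theorems on $\mathbb{H}^{n}$. Nevertheless, their proofs rely exclusively on ingredients available in every space of homogeneous type: the doubling property of $dz$, the $\mathcal{A}_{p}$-weighted boundedness of the Hardy-Littlewood maximal operator, the reverse H\"older self-improvement of Muckenhoupt weights, and the $L^{p(\cdot)}$-boundedness of $M$ for $p(\cdot) \in \mathcal{P}^{\log}$, the last of which is supplied on $\mathbb{H}^{n}$ by the results of Adam and of Stein cited in the proof of Proposition \ref{b_k functions}. Consequently, the adaptation to $\mathbb{H}^{n}$ amounts to retracing the algebraic skeleton of the Euclidean arguments.
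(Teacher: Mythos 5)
Your proposal is correct and follows essentially the same route as the paper: the paper also deduces the result by off-diagonal Rubio de Francia extrapolation, first upgrading Proposition \ref{fractional max} to a vector-valued weighted inequality with $\mathcal{A}_1$ weights by following the proof of Theorem 3.23 in the Cruz-Uribe--Martell--P\'erez monograph, and then passing to variable exponents by explicitly running the Rubio de Francia iteration algorithm $\mathcal{R}g=\sum_k M^k g/(2^k\|M\|^k)$ against a dual function, using the $L^{(q(\cdot)/q_0)'}$-boundedness of $M$ from Adamowicz--Harjulehto--H\"ast\"o --- i.e., the paper writes out the proof of the variable-exponent extrapolation theorem you cite as a black box. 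The only minor difference is that the paper disposes of the case $\alpha=0$ by citing Fang--Zhao directly rather than running the extrapolation scheme there.
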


\begin{proof}
The case $\alpha = 0$ was proved in \cite[see Theorem 4.2]{Fang}. For the case $0 < \alpha < Q$, we define
$$\mathcal{F} = \left\{ \left( \left( \sum_{j=1}^{N} (M_{\alpha}f_j)^{r} \right)^{1/r}, 
\left( \sum_{j=1}^{N} |f_j|^{r} \right)^{1/r} \right) : N \in \mathbb{N}, \{f_j \}_{j=1}^{N} \subset L^{\infty}_{comp} \right\},$$
where $L^{\infty}_{comp}$ denotes the set of bounded functions with compact support on $\mathbb{H}^{n}$.

Given $1 < p_0 < p_{-}$ fixed, let $q_0$ be defined by $\frac{1}{q_0} := \frac{1}{p_0} - \frac{\alpha}{Q}$. From Proposition \ref{fractional max}, Lemma \ref{prop Apq} and following the proof of \cite[Theorem 3.23]{Cruz} (considering there $\mathbb{H}^{n}$ instead 
of $\mathbb{R}^{n}$) we have, by Lemma \ref{prop Apq}-(iv), that there exists an universal constant $C > 0$ such that for any $(F, G) \in \mathcal{F}$ and any $\omega \in \mathcal{A}_1$ 
\begin{equation} \label{weighted fractional ineq}
\int_{\mathbb{H}^{n}} [F(z)]^{q_0} \omega(z) \, dz \leq C \left( \int_{\mathbb{H}^{n}} [G(z)]^{p_0} [\omega(z)]^{p_0/q_0} \, dz 
\right)^{q_0/p_0}.
\end{equation}
On the other hand, by Lemma \ref{potencia s} and Proposition \ref{norma equivalente}, there exists an universal constant $C > 0$ such that
\begin{equation} \label{norma}
\| F \|^{q_0}_{L^{q(\cdot)}} = \| F^{q_0} \|_{L^{q(\cdot)/q_{0}}}
\leq C \sup_{\| g \|_{L^{(q(\cdot)/q_0)'}} \leq 1} \int_{\mathbb{H}^{n}} \left| [F(z)]^{q_0} g(z) \right| dz.
\end{equation}

Let $\mathcal{R}$ be the operator defined on $L^{(q(\cdot)/q_0)'}(\mathbb{H}^{n})$ by
\[
\mathcal{R}g(z) = \sum_{k=0}^{\infty} \frac{M^{k}g(z)}{2^{k} \| M \|_{L^{(q(\cdot)/q_0)'}}},
\]
where, for $k \geq 1$, $M^{k}$ denotes $k$ iterations of the Hardy-Littlewood maximal operator $M$, $M^{0} = M$, and 
$\| M \|_{L^{(q(\cdot)/q_0)'}}$ is the operator norm of the maximal operator $M$ on 
$L^{(q(\cdot)/q_0)'}$. The well definition of the operator $\mathcal{R}$ follows from \cite[Theorem 1.7]{Adam}. 
Now, it is clear that:

$(i)$ if $g$ is non-negative, $g(z) \leq \mathcal{R}g(z)$ a.e. $z \in \mathbb{H}^{n}$;

$(ii)$ $\| \mathcal{R}g \|_{L^{(q(\cdot)/q_0)'}} \leq 2 \| g \|_{L^{(q(\cdot)/q_0)'}}$; 

$(iii)$ $\mathcal{R}g \in \mathcal{A}_1$ with $[\mathcal{R}g]_{\mathcal{A}_1} \leq 2 \| M \|_{L^{(q(\cdot)/q_0)'}}$.
\\
Since $F$ is non-negative, we can take the supremum in (\ref{norma}) over those non-negative $g$ only. For any fixed non-negative 
$g \in L^{(q(\cdot)/q_0)'}$, by $(i)$ above we have that
\begin{equation} \label{int g}
\int [F(z)]^{q_0} g(z) dz \leq \int [F(z)]^{q_0} (\mathcal{R}g)(z) dz.
\end{equation}
Then $(iii)$ and (\ref{weighted fractional ineq}), and H\"older's inequality yield
\begin{equation} \label{int Rg}
\int [F(z)]^{q_0} (\mathcal{R}g)(z) dz \leq C \left( \int [G(z)]^{p_0} [(\mathcal{R}g)(z)]^{p_0 / q_0} dz \right)^{q_0/p_0} 
\end{equation}
\[
\leq C \| G^{p_0} \|_{L^{p(\cdot)/p_0}}^{q_0/p_0} 
\|(\mathcal{R}g)^{p_0/q_0} \|_{L^{(p(\cdot)/p_0)'}}^{q_0/p_0}
\]
\[
= C \| G \|^{q_0}_{L^{p(\cdot)}} 
\|\mathcal{R}g \|_{L^{\frac{p_0}{q_0} \left(\frac{p(\cdot)}{p_0} \right)'}}
\]
since $\frac{1}{p(\cdot)} - \frac{1}{q(\cdot)} = \frac{1}{p_0} - \frac{1}{q_0}$, we have 
$\frac{p_0}{q_0} \left( \frac{p(\cdot)}{p_0} \right)' = \left( \frac{q(\cdot)}{q_0} \right)'$, so
\[
= C \| G \|^{q_0}_{L^{p(\cdot)}} \| \mathcal{R}g \|_{L^{(q(\cdot)/q_0)'}}
\]
now, $(ii)$ gives
\[
\leq C \| G \|^{q_0}_{L^{p(\cdot)}} \| g \|_{L^{(q(\cdot)/q_0)'}}.
\]
Thus, (\ref{int g}) and (\ref{int Rg}) lead to
\begin{equation} \label{norma2}
\int [F(z)]^{q_0} g(z) dz \leq C \| G \|^{q_0}_{L^{p(\cdot)}},
\end{equation}
for all non-negative $g$ with $\| g \|_{L^{(q(\cdot)/q_0)'}} \leq 1$. Then, (\ref{norma}) and (\ref{norma2}) give 
(\ref{maximal fract ineq}) for all finite sequences $\{f_j \}_{j=1}^{K} \subset L^{\infty}_{comp}$. Finally, by passing to the limit, we obtain (\ref{maximal fract ineq}) for all infinite sequences $\{f_j \}_{j=1}^{\infty} \subset L^{\infty}_{comp}$.
\end{proof}

\section{Variable Hardy spaces on $\mathbb{H}^{n}$}

We recall some terminologies and notations from the study of maximal functions used in \cite{Fang}. Given $L \in \mathbb{N}$, define 
\[
\mathcal{F}_{L}=\left\{ \varphi \in \mathcal{S}(\mathbb{H}^{n}): \| \varphi \|_{\mathcal{S}(\mathbb{H}^{n}), \, L}  \leq 1\right\}.
\]
For any $f \in \mathcal{S}'(\mathbb{H}^{n})$, the grand maximal function of $f$ is given by 
\[
\mathcal{M}_L f(z)=\sup\limits_{t>0}\sup\limits_{\phi \in \mathcal{F}_{L}}\left\vert \left( f \ast \phi_t \right)(z) \right\vert,
\]
where $\phi_t(z) = t^{-2n-2} \phi(t^{-1} \cdot z)$. 

\begin{definition} \label{Dp def} Given an exponent function $p(\cdot):\mathbb{H}^{n} \to ( 0,\infty)$ with $0 < p_{-} \leq p_{+} < \infty$, we define the integer $\mathcal{D}_{p(\cdot)}$ by
\[
\mathcal{D}_{p(\cdot)} := \min \{ k \in \mathbb{N} \cup \{0\} : (2n+k+3) p_{-} > 2n+2 \}.
\]
For $L \geq \mathcal{D}_{p(\cdot)} + Q + 3$, define 
the variable Hardy space $H^{p(\cdot)}(\mathbb{H}^{n})$ to be the collection of $f \in \mathcal{S}'(\mathbb{H}^{n})$ such that
$\| \mathcal{M}_L f \|_{L^{p(\cdot)}(\mathbb{H}^{n})} < \infty$. Then, the "norm" on the space $H^{p(\cdot)}(\mathbb{H}^{n})$ is taken to be
$\| f \|_{H^{p(\cdot)}} := \| \mathcal{M}_L f \|_{L^{p(\cdot)}}$.
\end{definition}

\begin{definition} \label{atom def} Let $p(\cdot):\mathbb{H}^{n} \to ( 0,\infty)$, $0 < p_{-} \leq p_{+} < \infty $, and $p_{0} > 1$. 
Fix an integer $D \geq \mathcal{D}_{p(\cdot)}$. A measurable function $a(\cdot)$ on $\mathbb{H}^{n}$ is called a $(p(\cdot), p_{0}, D)$ - atom centered at a $\rho$ - ball $B=B(z_0, \delta)$ if it satisfies the following conditions:

$a_{1})$ $\supp ( a ) \subset B$,

$a_{2})$ $\| a \|_{L^{p_{0}}(\mathbb{H}^{n})} \leq 
\displaystyle{\frac{| B |^{\frac{1}{p_{0}}}}{\| \chi _{B} \|_{L^{p(\cdot)}(\mathbb{H}^{n})}}}$,

$a_{3})$ $\displaystyle{\int_{\mathbb{H}^{n}}} a(z) \, z^{I} \, dz = 0$ for all multiindex $I$ such that $d(I) \leq D$.
\end{definition}

Indeed, every $(p(\cdot), p_{0}, D)$ - atom $a(\cdot)$ belongs to $H^{p(\cdot)}(\mathbb{H}^{n})$. Moreover, there exists an universal constant
$C > 0$ such that $\| a \|_{H^{p(\cdot)}} \leq C$ for all $(p(\cdot), p_{0}, D)$ - atom $a(\cdot)$.

\begin{remark} \label{atomo trasladado}
It is easy to check that if $a(\cdot)$ is a $(p(\cdot), p_{0}, D)$ - atom centered at the ball $B(z_0, \delta)$, then the function 
$a_{z_0}(\cdot) := a(z_0 \cdot (\cdot))$ is a $(p(\cdot), p_{0}, D)$ - atom centered at the ball $B(e, \delta)$.
\end{remark}

\begin{definition} Let $p(\cdot):\mathbb{H}^{n} \to ( 0,\infty)$ be an exponent function such that $0 < p_{-} \leq p_{+} < \infty$.
Given a sequence of nonnegative numbers $\{ \lambda_j \}_{j=1}^{\infty}$ and a family of $\rho$ - balls $\{ B_j \}_{j=1}^{\infty}$, we define
\begin{equation} \label{cantidad A}
\mathcal{A} \left( \{ \lambda_j \}_{j=1}^{\infty}, \{ B_j \}_{j=1}^{\infty}, p(\cdot) \right) := 
\left\| \left\{ \sum_{j=1}^{\infty} \left( \frac{\lambda_j  \chi_{B_j}}{\| \chi_{B_j} \|_{L^{p(\cdot)}}} \right)^{\underline{p}} 
\right\}^{1/\underline{p}} \right\|_{L^{p(\cdot)}}.
\end{equation}
\end{definition}

To get our main results we need the following version of the atomic decomposition for $H^{p(\cdot)}(\mathbb{H}^{n})$ obtained in \cite{Fang}.

\begin{theorem} \label{atomic decomp}
Let $1 < p_0 < \infty$, $p(\cdot) \in \mathcal{P}^{\log}(\mathbb{H}^{n})$ with $0 < p_{-} \leq p_{+} < \infty$. Then, for every $f \in H^{p(\cdot)}(\mathbb{H}^{n}) \cap L^{p_0}(\mathbb{H}^{n})$ and
every integer $D \geq \mathcal{D}_{p(\cdot)}$ fixed, there exist a sequence of nonnegative numbers 
$\{ \lambda_j \}_{j=1}^{\infty}$, a sequence of $\rho$ - balls $\{ B_j \}_{j=1}^{\infty}$ with the bounded intersection property and 
$(p(\cdot), p_0, D)$ - atoms $a_j$ supported on $B_j$ such that $f = \displaystyle{\sum_{j=1}^{\infty} \lambda_j a_j}$ converges 
in $L^{p_0}(\mathbb{H}^{n})$ and
\begin{equation} \label{Hp norm atomic}
\mathcal{A} \left( \{ \lambda_j \}_{j=1}^{\infty}, \{ B_j \}_{j=1}^{\infty}, p(\cdot) \right) 
\lesssim \| f \|_{H^{p(\cdot)}(\mathbb{H}^{n})},
\end{equation}
where the implicit constant in (\ref{Hp norm atomic}) is independent of $\{ \lambda_j \}_{j=1}^{\infty}$, $\{ B_j \}_{j=1}^{\infty}$, and 
$f$.
\end{theorem}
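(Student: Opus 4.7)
The plan is to follow the classical Calderón--Zygmund/Whitney scheme adapted to the variable Hardy setting, exactly in the spirit of \cite{Fang}. Fix $L$ large enough (the parameter in the definition of $\mathcal{M}_L$), and for each $k\in\mathbb{Z}$ consider the level set $\Omega_k := \{z\in\mathbb{H}^n : \mathcal{M}_L f(z) > 2^k\}$. Since $\mathcal{M}_L f$ is lower semicontinuous, each $\Omega_k$ is open, and since $f\in L^{p_0}$ with $p_0>1$ we have $|\Omega_k|<\infty$. The first step is to apply a Whitney decomposition to each $\Omega_k$, producing countable families of $\rho$-balls $\{B_{k,j}\}_j$ with bounded overlap, whose dilates $c B_{k,j}$ still lie in $\Omega_k$ while slightly larger dilates meet $\Omega_k^c$; this gives Whitney balls whose radii are comparable to $\mathrm{dist}(B_{k,j},\Omega_k^c)$.

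Next I would build a smooth partition of unity $\{\eta_{k,j}\}$ subordinated to $\{B_{k,j}\}$ on $\mathbb{H}^n$, and for each $(k,j)$ define the local "bad piece" by
\[
b_{k,j} := (f - P_{k,j})\,\eta_{k,j},
\]
where $P_{k,j}$ is the unique polynomial of homogeneous degree $\leq D$ that makes $b_{k,j}$ have vanishing moments $\int b_{k,j}(z)\, z^{I}\,dz = 0$ for every $I$ with $d(I)\leq D$, computed against the weighted measure $\eta_{k,j}\,dz$ as in Folland--Stein. Defining the "good" part at level $k$ by $g_k := f - \sum_j b_{k,j}$ and telescoping one gets
\[
f = g_{k_0} + \sum_{k\geq k_0}\sum_j (b_{k,j} - b_{k+1,j'(k,j)}),
\]
where each term in the inner sum is a multiple of a $(p(\cdot),p_0,D)$-atom centered on $c B_{k,j}$. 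Letting $k_0\to -\infty$, standard pointwise bounds for $b_{k,j}$ in terms of $\mathcal{M}_L f \cdot \mathbf{1}_{\Omega_k}$ (the Folland--Stein estimates on Whitney balls) yield $g_{k_0}\to 0$ in $L^{p_0}$, giving the representation $f=\sum_j \lambda_j a_j$ with $\lambda_j \approx 2^{k}\,\|\chi_{B_{k,j}}\|_{L^{p(\cdot)}}$ and with the bounded-intersection property inherited from the Whitney families.

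The heart of the matter, and what I expect to be the main obstacle, is the norm estimate \eqref{Hp norm atomic}. The pointwise control $|b_{k,j}|\lesssim 2^k$ on $B_{k,j}$ gives
\[
\sum_{k,j}\left(\frac{\lambda_{k,j}\chi_{B_{k,j}}}{\|\chi_{B_{k,j}}\|_{L^{p(\cdot)}}}\right)^{\underline{p}} \lesssim \sum_{k} 2^{k\underline p}\,\chi_{\Omega_k},
\]
by bounded overlap of the Whitney cubes. Raising to the $1/\underline p$ power and taking the $L^{p(\cdot)}$-norm, the right-hand side is comparable to $\|\mathcal{M}_L f\|_{L^{p(\cdot)}} = \|f\|_{H^{p(\cdot)}}$ via the standard rewrite $\sum_k 2^{k\underline p}\chi_{\Omega_k}\approx (\mathcal{M}_L f)^{\underline p}$ combined with Lemma \ref{potencia s}(iv); here the log-Hölder hypothesis on $p(\cdot)$ enters through the equivalence of the norms of characteristic functions of Whitney balls, i.e.\ through Lemmas \ref{estim pp prime} and \ref{2B}. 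The convergence in $L^{p_0}$ follows because $f\in L^{p_0}$, $|\Omega_k|\to 0$ as $k\to\infty$, and the bad parts at large $|k|$ are controlled by $\mathcal{M}_L f\,\chi_{\Omega_k}$, which converges to $0$ in $L^{p_0}$ by dominated convergence. Putting these pieces together, and following \cite{Fang} for the precise technical estimates of the Whitney decomposition on $\mathbb{H}^n$, gives the theorem.
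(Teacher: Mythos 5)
Your proposal is correct in outline and takes essentially the same route as the paper: the paper's proof simply cites \cite[Theorem 4.4]{Fang} for the existence of the decomposition and the bound (\ref{Hp norm atomic}) and adapts \cite{Rocha2} together with the Folland--Stein construction for the $L^{p_0}$-convergence, and your sketch is exactly the Calder\'on--Zygmund/grand-maximal-function argument underlying those citations. (One sketch-level imprecision worth noting: the telescoping term is not a simple pairing $b_{k,j}-b_{k+1,j'(k,j)}$, since $g_{k+1}-g_k$ produces cross terms $\eta_{k,j}\,b_{k+1,j'}$ that require additional polynomial corrections to restore the moment conditions; this is supplied by the construction you defer to.)
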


\begin{proof}
The existence of a such atomic decomposition as well as the validity of (\ref{Hp norm atomic}) are guaranteed by 
\cite[Theorem 4.4, see p. 261 - Part 2]{Fang}. Its construction is analogous to that given for Hardy spaces on homogeneous groups; which 
in turn is similar to the construction on Euclidean spaces (see \cite{Stein3}). So, by adapting the proof of \cite[Theorem 3.1]{Rocha2} to our setting, and taking into account the atomic decomposition in \cite[see p. 97-102]{Folland}, we get the convergence of the atomic series 
to $f$ in $L^{p_0}(\mathbb{H}^{n})$.
\end{proof}

\begin{proposition} \label{dense set}
Let $1 < p_0 < \infty$ and $p(\cdot) \in \mathcal{P}^{\log}(\mathbb{H}^{n})$ with $0 < p_{-} \leq p_{+} < \infty$. Then
$H^{p(\cdot)}(\mathbb{H}^{n}) \cap L^{p_0}(\mathbb{H}^{n}) \subset H^{p(\cdot)}(\mathbb{H}^{n})$ densely.
\end{proposition}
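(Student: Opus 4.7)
The plan is to approximate an arbitrary $f\in H^{p(\cdot)}(\mathbb{H}^{n})$ by smooth compactly supported functions, all of which automatically lie in $L^{p_0}(\mathbb{H}^n)\cap H^{p(\cdot)}(\mathbb{H}^{n})$. I will proceed in two stages: first mollify $f$ against a Schwartz approximate identity, then truncate the mollification by a smooth compactly supported cutoff.

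For the first stage, fix $\phi\in\mathcal{S}(\mathbb{H}^n)$ with $\int\phi=1$, normalized so that $\phi/C_0\in\mathcal{F}_{L}$ for the integer $L$ appearing in Definition~\ref{Dp def}, and set $f_t:=f\ast\phi_t$. Since for any $\psi\in\mathcal{F}_{L}$ and $s,t>0$ the function $\psi_s\ast\phi_t$ belongs to $C\cdot\mathcal{F}_{L}$ with $C$ independent of $s,t$, one obtains the pointwise bound $\mathcal{M}_{L}(f_t)(z)\leq C\,\mathcal{M}_{L}f(z)$ for every $z$ and every $t>0$. In particular $f_t\in H^{p(\cdot)}(\mathbb{H}^n)$ with norm controlled by $\|f\|_{H^{p(\cdot)}}$, and the majorant $\mathcal{M}_{L}(f-f_t)(z)\leq 2C\,\mathcal{M}_{L}f(z)\in L^{p(\cdot)}(\mathbb{H}^n)$ is uniform in $t>0$. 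I would then show that $\mathcal{M}_{L}(f-f_t)(z)\to 0$ for a.e.\ $z$ as $t\to 0^{+}$; together with the uniform $L^{p(\cdot)}$-majorant, the dominated convergence theorem in $L^{p(\cdot)}(\mathbb{H}^n)$ (valid since $p_{+}<\infty$) yields $\|f-f_t\|_{H^{p(\cdot)}}\to 0$.

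For the second stage, pick $\eta\in C_{c}^{\infty}(\mathbb{H}^n)$ with $\eta\equiv 1$ on $B(e,1)$ and $\supp\eta\subset B(e,2)$, and set $\eta_{R}(z):=\eta(R^{-1}\cdot z)$. Define $F_{t,R}:=\eta_{R}\cdot f_t\in C_{c}^{\infty}(\mathbb{H}^n)\subset L^{p_0}(\mathbb{H}^n)\cap H^{p(\cdot)}(\mathbb{H}^n)$. For fixed $t>0$, an analogous dominated convergence argument, with $(1-\eta_{R})f_t\to 0$ pointwise as $R\to\infty$ and with the same $L^{p(\cdot)}$-majorant $C\,\mathcal{M}_{L}f$, gives $\|f_t-F_{t,R}\|_{H^{p(\cdot)}}\to 0$ as $R\to\infty$. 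Combining the two stages via the quasi-triangle inequality of Lemma~\ref{potencia s}(iii) and a diagonal selection $t_k\to 0$, $R_k\to\infty$ then produces functions $F_{t_k,R_k}\in H^{p(\cdot)}(\mathbb{H}^n)\cap L^{p_0}(\mathbb{H}^n)$ converging to $f$ in $H^{p(\cdot)}(\mathbb{H}^n)$, which is the claimed density.

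The principal technical obstacle is the pointwise a.e.\ convergence $\mathcal{M}_{L}(f-f_t)(z)\to 0$, since the defining supremum ranges over all $\psi\in\mathcal{F}_{L}$ and all scales $s>0$, and the limit in $t$ must be uniform in these parameters at almost every point. The strategy is to write $(f-f_t)\ast\psi_s(z)=f\ast(\psi_s-\psi_s\ast\phi_t)(z)$ and check that, after enlarging $L$ to some $L'$, the family $\{\psi_s-\psi_s\ast\phi_t:\psi\in\mathcal{F}_{L},\,s>0\}$ has $\mathcal{S}(\mathbb{H}^{n})$-seminorms of order $L'$ tending to zero uniformly in $\psi$ and $s$ as $t\to 0^{+}$; pairing this against $f\in\mathcal{S}'(\mathbb{H}^n)$ and using $\mathcal{M}_{L'}f(z)<\infty$ a.e.\ delivers the required pointwise vanishing. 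Once this delicate point is secured, the rest of the argument reduces to two routine applications of $L^{p(\cdot)}$-dominated convergence.
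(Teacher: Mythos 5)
Your approach diverges from the paper's, which simply invokes the Calder\'on--Zygmund-type argument of Nakai--Sawano (decomposing $f$ at height $\lambda$ relative to $\mathcal{M}_L f$ and showing the good part lies in $L^{\infty}\cap H^{p(\cdot)}\subset L^{p_0}\cap H^{p(\cdot)}$ while the bad part tends to $0$ in $H^{p(\cdot)}$ by dominated convergence). Your mollification-plus-truncation scheme has two genuine gaps.

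First, the mechanism you propose for the step you yourself flag as the principal obstacle does not work. By homogeneity of convolution under dilations, $\psi_s-\phi_t\ast\psi_s=(\psi-\phi_{t/s}\ast\psi)_s$ (and similarly with the factors in the order $\phi_t\ast\psi_s$ actually required by $(f\ast\phi_t)\ast\psi_s=f\ast(\phi_t\ast\psi_s)$). When $s\gtrsim t$ the inner function is small in every Schwartz seminorm, but when $s\ll t$ the parameter $t/s$ tends to infinity and $\phi_{t/s}\ast\psi$ flattens out to something of size $(t/s)^{-Q}$, so $\psi-\phi_{t/s}\ast\psi$ stays close to $\psi$ rather than to $0$. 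Hence the seminorms of the family $\{\psi_s-\phi_t\ast\psi_s\}$ do \emph{not} tend to $0$ uniformly in $s>0$, and the asserted a.e.\ convergence $\mathcal{M}_L(f-f_t)(z)\to 0$ is not obtained this way. (The statement $f\ast\phi_t\to f$ in $H^{p(\cdot)}$ is usually derived \emph{after} one has a density result or the atomic decomposition, so relying on it here risks circularity.) Moreover, even granting Stage 1, you would still need $f_t\in L^{p_0}$, which requires an argument (one can get $f_t\in L^{\infty}\cap L^{p(\cdot)}$ from $|f\ast\phi_t(z)|\lesssim\inf_{B(z,t)}\mathcal{M}_L f$, but this only lands in $L^{p_0}$ for $p_0\geq p_+$).

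Second, the truncation stage is false as stated: $C_c^{\infty}(\mathbb{H}^{n})\not\subset H^{p(\cdot)}(\mathbb{H}^{n})$ in general. If $g$ is compactly supported with $\int g\neq 0$, then choosing $s\approx\rho(z)$ gives $\mathcal{M}_L g(z)\gtrsim\rho(z)^{-Q}$ for large $\rho(z)$, which fails to lie in $L^{p(\cdot)}$ whenever $p(\cdot)\leq 1$ near infinity --- precisely the regime $\frac{Q}{Q+N}<p_-\leq 1$ that the paper cares about. Equivalently, elements of $H^{p(\cdot)}$ with $p_+\leq 1$ must have vanishing integral, a cancellation that multiplication by $\eta_R$ destroys; for the same reason there is no pointwise bound $\mathcal{M}_L(\eta_R f_t)\lesssim\mathcal{M}_L f$, so the claimed common majorant for the dominated-convergence step in Stage 2 is unavailable. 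The truncation should be dropped entirely and replaced by a direct verification that the approximants lie in $L^{p_0}$, or the whole argument replaced by the Calder\'on--Zygmund decomposition as in the cited source.
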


\begin{proof}
The proof is similar to that given in \cite[see p. 3693]{Nakai}.
\end{proof}

We conclude this section with two results concerning to the amount defined by (\ref{cantidad A}).

\begin{lemma} \label{ineq p star}
Let $p(\cdot) : \mathbb{H}^{n} \to (0, \infty)$ be an exponent function with $0 < p_{-} \leq p_{+} < \infty$ and let $\{ B_j \}$ be a family of $\rho$ - balls which satisfies the bounded intersection property. If $0 < p_{*} < \underline{p}$, then
\[
\left\| \left\{ \sum_j \left( \frac{\lambda_j \chi_{B_j}}{\| \chi_{B_j} \|_{L^{p(\cdot)}}} \right)^{p_{*}} \right\}^{1/p_{*}}
\right\|_{L^{p(\cdot)}} \approx \mathcal{A} \left( \{ \lambda_j \}_{j=1}^{\infty}, \{ B_j \}_{j=1}^{\infty}, p(\cdot) \right)
\]
for any sequence of nonnegative numbers $\{ \lambda_j \}_{j=1}^{\infty}$.
\end{lemma}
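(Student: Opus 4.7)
The proof will reduce to a pointwise comparison between the two quasi-norms inside the $L^{p(\cdot)}$ norm and then apply the order-preserving property of $\|\cdot\|_{L^{p(\cdot)}}$. The key fact is that the bounded intersection property collapses the sums $\sum_j$ into sums of uniformly finitely many nonzero terms at each point.

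The easy direction uses the elementary monotonicity of $\ell^{s}$ quasi-norms: for any nonnegative sequence $\{c_j\}$ and $0 < p_* < \underline{p}$, one has $(\sum_j c_j^{\underline{p}})^{1/\underline{p}} \leq (\sum_j c_j^{p_*})^{1/p_*}$. Applying this pointwise with $c_j = \lambda_j \chi_{B_j}(z) / \|\chi_{B_j}\|_{L^{p(\cdot)}}$ and then taking $L^{p(\cdot)}$ norms yields $\mathcal{A}(\{\lambda_j\}, \{B_j\}, p(\cdot))$ as a lower bound for the left-hand side of the claimed equivalence.

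For the reverse inequality, let $N$ denote the multiplicity constant for the family $\{B_j\}$, so that $\sum_j \chi_{B_j}(z) \le N$ for every $z \in \mathbb{H}^n$. Fix $z$ and let $J(z) = \{ j : z \in B_j \}$, which has cardinality at most $N$. On this finite index set I would apply Hölder's inequality in the form
\[
\sum_{j \in J(z)} a_j^{p_*} \leq \left( \sum_{j \in J(z)} 1 \right)^{1 - p_*/\underline{p}} \left( \sum_{j \in J(z)} a_j^{\underline{p}} \right)^{p_*/\underline{p}} \leq N^{1 - p_*/\underline{p}} \left( \sum_{j \in J(z)} a_j^{\underline{p}} \right)^{p_*/\underline{p}},
\]
with $a_j = \lambda_j / \|\chi_{B_j}\|_{L^{p(\cdot)}}$, which is valid because $p_*/\underline{p} < 1$ means its conjugate exponent is well-defined and positive. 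Raising to the $1/p_*$ power and reinserting the characteristic functions gives the pointwise estimate
\[
\left( \sum_j \bigl( a_j \chi_{B_j}(z) \bigr)^{p_*} \right)^{1/p_*} \leq N^{1/p_* - 1/\underline{p}} \left( \sum_j \bigl( a_j \chi_{B_j}(z) \bigr)^{\underline{p}} \right)^{1/\underline{p}}.
\]
Taking $L^{p(\cdot)}$ norms on both sides completes the proof.

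There is essentially no main obstacle: once the bounded intersection property is used to localize each pointwise sum to at most $N$ terms, both directions follow from a scalar $\ell^s$–$\ell^t$ comparison on a finite-dimensional sequence space. The only care needed is that the implicit constants depend only on $N$, $p_*$ and $\underline{p}$, which is automatic from the computation above.
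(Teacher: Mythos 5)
Your proof is correct and follows essentially the same strategy as the paper: the lower bound comes from the pointwise embedding $\ell^{p_*}\hookrightarrow\ell^{\underline{p}}$, and the upper bound from using the bounded intersection property to reduce each pointwise sum to at most $N$ terms, where a finite-dimensional reverse comparison gives a constant depending only on $N$, $p_*$, $\underline{p}$. The only cosmetic difference is that you apply H\"older directly with exponent $\underline{p}/p_*$ on the index set $J(z)$, whereas the paper routes through an auxiliary exponent $\underline{p}/N<p_*$ and a quoted elementary inequality from Grafakos.
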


\begin{proof} The embedding $\ell^{p_{*}} \subset \ell^{\underline{p}}$ implies that
\[
\mathcal{A} \left( \{ \lambda_j \}_{j=1}^{\infty}, \{ B_j \}_{j=1}^{\infty}, p(\cdot) \right) \leq
\left\| \left\{ \sum_j \left( \frac{\lambda_j \chi_{B_j}}{\| \chi_{B_j} \|_{L^{p(\cdot)}}} \right)^{p_{*}} \right\}^{1/p_{*}}
\right\|_{L^{p(\cdot)}}.
\]
On the other hand, there exists $N \in \mathbb{N}$ such that $0 < \underline{p}/N < p_{*}$ and since 
$\ell^{\underline{p}/N} \subset \ell^{p_{*}}$ embed continuously, it follows that
\[
\left\| \left\{ \sum_j \left( \frac{\lambda_j \chi_{B_j}}{\| \chi_{B_j} \|_{L^{p(\cdot)}}} \right)^{p_{*}} \right\}^{1/p_{*}}
\right\|_{L^{p(\cdot)}} \leq
\left\| \left\{ \sum_j \left( \frac{\lambda_j \chi_{B_j}}{\| \chi_{B_j} \|_{L^{p(\cdot)}}} \right)^{\underline{p}/N} 
\right\}^{N/\underline{p}} \right\|_{L^{p(\cdot)}},
\]
the bounded intersection property of the family $\{ B_j \}$ and \cite[1.1.4. (c), p. 12]{Grafakos} give
\[
\lesssim \left\| \left\{ \sum_j \left( \frac{\lambda_j \chi_{B_j}}{\| \chi_{B_j} \|_{L^{p(\cdot)}}} \right)^{\underline{p}} 
\right\}^{1/\underline{p}} \right\|_{L^{p(\cdot)}} = \mathcal{A} \left( \{ \lambda_j \}_{j=1}^{\infty}, \{ B_j \}_{j=1}^{\infty}, p(\cdot) \right).
\]
This finishes the proof.
\end{proof}

\begin{proposition} \label{Estimate_qp}
Let $0 < \alpha < Q$ and let $p(\cdot) : \mathbb{H}^{n} \to (0, \infty)$ such that $p(\cdot )\in \mathcal{P}^{\log}(\mathbb{H}^{n})$ and 
$0 < p_{-} \leq p_{+} < \frac{Q}{\alpha}$. If $\frac{1}{q(\cdot)} := \frac{1}{p(\cdot)} - \frac{\alpha}{Q}$, then
\[
\mathcal{A} \left( \{ \lambda_j \}_{j=1}^{\infty}, \{ B_j \}_{j=1}^{\infty}, q(\cdot) \right) \lesssim
\mathcal{A} \left( \{ \lambda_j \}_{j=1}^{\infty}, \{ B_j \}_{j=1}^{\infty}, p(\cdot) \right)
\]
for any sequence of nonnegative numbers $\{ \lambda_j \}_{j=1}^{\infty}$ and any family of $\rho$ - balls $\{ B_j \}_{j=1}^{\infty}$ of 
$\mathbb{H}^{n}$.
\end{proposition}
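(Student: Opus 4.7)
The plan is to reduce the desired inequality to the off-diagonal Fefferman--Stein vector-valued maximal inequality (Theorem~\ref{Feff-Stein ineq}) applied to the fractional maximal operator after an auxiliary rescaling.

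Since $\alpha>0$ and $1/q(\cdot)=1/p(\cdot)-\alpha/Q$, I have $p_{-}<q_{-}$ and hence $\underline{p}\leq\underline{q}$. The pointwise inequality $\bigl(\sum_{j}a_{j}^{\underline{q}}\bigr)^{1/\underline{q}}\leq\bigl(\sum_{j}a_{j}^{\underline{p}}\bigr)^{1/\underline{p}}$ for nonnegative $a_{j}$ (from $\ell^{\underline{p}}\hookrightarrow\ell^{\underline{q}}$) yields
\[
\mathcal{A}(\{\lambda_{j}\},\{B_{j}\},q(\cdot)) \;\leq\; \Bigl\|\Bigl(\sum_{j}\bigl(\lambda_{j}\chi_{B_{j}}/\|\chi_{B_{j}}\|_{L^{q(\cdot)}}\bigr)^{\underline{p}}\Bigr)^{1/\underline{p}}\Bigr\|_{L^{q(\cdot)}}.
\]
I now fix an auxiliary parameter $p_{*}\in(0,\underline{p})$ and put $g_{j}:=(\lambda_{j}\chi_{B_{j}}/\|\chi_{B_{j}}\|_{L^{p(\cdot)}})^{p_{*}}$.

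Next I establish the key pointwise comparison. The log-H\"older hypothesis together with Lemma~\ref{estim pp prime} (via the standard relation $\|\chi_{B}\|_{L^{r(\cdot)}}\approx|B|^{1/r_{B}}$ for appropriate averages) gives $\|\chi_{B}\|_{L^{q(\cdot)}}\approx|B|^{-\alpha/Q}\|\chi_{B}\|_{L^{p(\cdot)}}$ uniformly in $B$. Combined with the elementary pointwise bound $|B|^{p_{*}\alpha/Q}\chi_{B}\leq M_{p_{*}\alpha}\chi_{B}$ (take $B$ itself in the supremum defining $M_{p_{*}\alpha}$) and the identity $\chi_{B_{j}}^{p_{*}}=\chi_{B_{j}}$, one obtains
\[
\bigl(\lambda_{j}\chi_{B_{j}}/\|\chi_{B_{j}}\|_{L^{q(\cdot)}}\bigr)^{p_{*}} \;\lesssim\; M_{p_{*}\alpha}g_{j};
\]
raising both sides to the power $\underline{p}/p_{*}>1$ gives $(\lambda_{j}\chi_{B_{j}}/\|\chi_{B_{j}}\|_{L^{q(\cdot)}})^{\underline{p}}\lesssim(M_{p_{*}\alpha}g_{j})^{\underline{p}/p_{*}}$. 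Summing in $j$, extracting the $\underline{p}$-th root, and taking the $L^{q(\cdot)}$-norm reduces the problem to proving $\bigl\|\bigl(\sum_{j}(M_{p_{*}\alpha}g_{j})^{\underline{p}/p_{*}}\bigr)^{1/\underline{p}}\bigr\|_{L^{q(\cdot)}}\lesssim\mathcal{A}(\{\lambda_{j}\},\{B_{j}\},p(\cdot))$.

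For this, I invoke Theorem~\ref{Feff-Stein ineq} applied to the sequence $\{g_{j}\}$ with the rescaled exponents $P(\cdot):=p(\cdot)/p_{*}$ and $Q^{*}(\cdot):=q(\cdot)/p_{*}$, fractional parameter $p_{*}\alpha$, and vector exponent $r:=\underline{p}/p_{*}>1$. The hypotheses hold: $P(\cdot)\in\mathcal{P}^{\log}(\mathbb{H}^{n})$, $1<P_{-}\leq P_{+}<Q/(p_{*}\alpha)$ because $p_{*}<\underline{p}\leq p_{-}$ and $p_{+}<Q/\alpha$, and $1/Q^{*}(\cdot)=1/P(\cdot)-(p_{*}\alpha)/Q$. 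Two applications of Lemma~\ref{potencia s} (the identity $\|F\|_{L^{r(\cdot)/s}}=\|F^{1/s}\|_{L^{r(\cdot)}}^{s}$) move between $L^{r(\cdot)/p_{*}}$ and $L^{r(\cdot)}$; noting $g_{j}^{\underline{p}/p_{*}}=(\lambda_{j}\chi_{B_{j}}/\|\chi_{B_{j}}\|_{L^{p(\cdot)}})^{\underline{p}}$ identifies the right-hand side of the Feff--Stein estimate as $\mathcal{A}(\{\lambda_{j}\},\{B_{j}\},p(\cdot))^{p_{*}}$. Extracting the $p_{*}$-th root and chaining with the previous paragraph yields the result.

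The main obstacle is that $\underline{p}\leq 1$ is permitted, so Theorem~\ref{Feff-Stein ineq}, whose statement demands $r>1$ and $p_{-}>1$, cannot be applied directly with $r=\underline{p}$. Introducing the parameter $p_{*}\in(0,\underline{p})$ and the substitution $g_{j}=(\lambda_{j}\chi_{B_{j}}/\|\chi_{B_{j}}\|_{L^{p(\cdot)}})^{p_{*}}$ is precisely what transfers the argument to the rescaled pair $(p(\cdot)/p_{*},q(\cdot)/p_{*})$ whose lower bound strictly exceeds $1$, and where the vector exponent $r=\underline{p}/p_{*}$ is admissible; carefully matching the two uses of Lemma~\ref{potencia s} with the Feff--Stein estimate is the technical heart of the argument.
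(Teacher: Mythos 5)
Your argument is correct and follows essentially the same route as the paper: the embedding $\ell^{\underline{p}}\hookrightarrow\ell^{\underline{q}}$, the norm comparison $\|\chi_B\|_{L^{q(\cdot)}}\approx|B|^{-\alpha/Q}\|\chi_B\|_{L^{p(\cdot)}}$, pointwise domination by a fractional maximal function, a rescaling via Lemma \ref{potencia s}-(iv), and an application of Theorem \ref{Feff-Stein ineq} to the rescaled exponents. The only cosmetic difference is that the paper fixes the auxiliary parameter as $p_{*}=\underline{p}/2$ (so that the vector exponent is $r=2$ and the fractional parameter is $\alpha\underline{p}/2$), whereas you keep a general $p_{*}\in(0,\underline{p})$.
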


\begin{proof} Since $\ell^{\underline{p}} \subset \ell^{\underline{q}}$ embed continuously, we have 
\[
\mathcal{A} \left( \{ \lambda_j \}_{j=1}^{\infty}, \{ B_j \}_{j=1}^{\infty}, q(\cdot) \right) =
\left\| \left\{ \sum_{j} \left( \frac{\lambda_j \chi_{B_j}}{\| \chi_{B_j} \|_{L^{q(\cdot)}}} \right)^{\underline{q}} 
\right\}^{1/\underline{q}} \right\|_{L^{q(\cdot)}}
\]
\[
\lesssim \left\| \left\{ \sum_{j} \left( \frac{ \lambda_j \chi_{B_j}}{\| \chi_{B_j} \|_{L^{q(\cdot)}}} \right)^{\underline{p}} 
\right\}^{1/\underline{p}} \right\|_{L^{q(\cdot)}},
\]
\cite[Lemma 4.1]{Fang} gives $\| \chi_B \|_{L^{q(\cdot)}} \approx |B|^{-\alpha/Q} \| \chi_B \|_{L^{p(\cdot)}}$ for every 
$\rho$ - ball $B$ of $\mathbb{H}^{n}$, so
\[
\lesssim \left\| \left\{ \sum_{j} \left( \frac{\lambda_j |B_j|^{\alpha/Q} \chi_{B_j}}{\| \chi_{B_j} \|_{L^{p(\cdot)}}} 
\right)^{\underline{p}} \right\}^{1/\underline{p}} \right\|_{L^{q(\cdot)}},
\]
now it is easy to check that 
$|B_j|^{\alpha/Q} \chi_{B_j} (z) \leq M_{\frac{\alpha \underline{p}}{2}}(\chi_{B_j})^{\frac{2}{\underline{p}}}(z)$ for all $j$, then
\[
\lesssim \left\| \left\{ \sum_{j} \left( \frac{\lambda_j \, M_{\frac{\alpha \underline{p}}{2}}(\chi_{B_j})^{\frac{2}{\underline{p}}}}{\| \chi_{B_j} \|_{L^{p(\cdot)}}} \right)^{\underline{p}} \right\}^{1/\underline{p}} \right\|_{L^{q(\cdot)}}
\]
Lemma \ref{potencia s}-(iv) gives
\[ 
= \left\| \left\{ \sum_{j} \left( \frac{\lambda_{j}^{\underline{p}} \, M_{\frac{\alpha \underline{p}}{2}}(\chi_{B_j})^{2}}{\| \chi_{B_j} \|^{\underline{p}}_{L^{p(\cdot)}}} \right) \right\}^{1/2}
\right\|^{2/\underline{p}}_{L^{2q(\cdot)/\underline{p}}},
\]
by applying Theorem \ref{Feff-Stein ineq} we obtain
\[
\lesssim \left\| \left\{ \sum_{j} \left( \frac{\lambda_{j}^{\underline{p}} \, \chi_{B_j}}{\| \chi_{B_j} \|^{\underline{p}}_{L^{p(\cdot)}}} \right) \right\}^{1/2} \right\|^{2/\underline{p}}_{L^{2p(\cdot)/\underline{p}}} =
\mathcal{A} \left( \{ \lambda_j \}_{j=1}^{\infty}, \{ B_j \}_{j=1}^{\infty}, p(\cdot) \right).
\]
This completes the proof.
\end{proof}

\section{Main results} 

The convolution kernels we shall be considering are the introduced by Folland and Stein \cite{Folland} (see Ch. 6 and Remark 6.12). 
Suppose $0 \leq \alpha < Q$ and $N \in \mathbb{N}$. For $0 < \alpha < Q$  a kernel of type $(\alpha, N)$ is a function $K_{\alpha}$ of 
class $C^{N}$ on $\mathbb{H}^{n} \setminus \{ e \}$, which satisfies
\begin{equation} \label{decay}
\left|(\widetilde{X}^{I}K_{\alpha})(z) \right| \lesssim \rho(z)^{\alpha - Q - d(I)} \,\,\,\, \text{for all} \,\, d(I) \leq N \,\,\, \text{and all} \,\, z \neq e.
\end{equation}
A kernel of type $(0,N)$ is a distribution $K_{0}$ on $\mathbb{H}^{n}$ which is of class $C^{N}$ on $\mathbb{H}^{n} \setminus \{ e \}$, satisfies (\ref{decay}) with $\alpha = 0$, and
\begin{equation} \label{cota L2}
\|f \ast K_0 \|_{L^{2}(\mathbb{H}^{n})} \leq C \| f \|_{L^{2}(\mathbb{H}^{n})}, \,\,\, \text{for all} \,\, f \in \mathcal{S}(\mathbb{H}^{n}).
\end{equation}

\begin{remark} \label{operator Ta}
If $0 < \alpha < Q$ and $K_{\alpha}$ is a kernel of type $(\alpha, N)$, from \cite[Proposition 6.2]{Folland}, it follows that the operator 
$T_{\alpha} : f \to f \ast K_{\alpha}$ is bounded from $L^{p_0}(\mathbb{H}^{n})$ to $L^{q_0}(\mathbb{H}^{n})$ for 
$1 < p_0 < \frac{Q}{\alpha}$ and $\frac{1}{q_0} = \frac{1}{p_0} - \frac{\alpha}{Q}$.
\end{remark}

\begin{remark} \label{operator T0}
Given a kernel $K_0$ of type $(0,N)$, by (\ref{cota L2}), it follows that the operator $U_0 : f \to f \ast K_0$, 
$f \in \mathcal{S}(\mathbb{H}^{n})$, can be extended to a bounded operator on $L^{2}(\mathbb{H}^{n})$, a such extension is unique. We denote this extension by $T_0$. Now, it is easy to check that if $a(\cdot) \in L^{2}(\mathbb{H}^{n})$ and their support is contained 
in the $\rho$ - ball $B(z_0, \delta)$, then $T_{0}a(z) = (a \ast K_{0})(z)$ a.e. $z \notin B(z_0, 2\delta)$.
\end{remark}

In the sequel, given a kernel $K_{\alpha}$ of type $(\alpha, N)$ with $0 \leq \alpha < Q$, we consider the operator $T_{\alpha}$ defined by
\begin{equation} \label{operator T alpha}
T_{\alpha} = \left\{ \begin{array}{c}
                      \text{right convolution operator by} \, K_{\alpha}, \,\, \text{if} \,\, 0 < \alpha < Q \\
											\text{extension of the operator} \,\, U_0 \,\, \text{on} \,\, L^{2}(\mathbb{H}^{n}), \,\, \text{if} \,\, \alpha = 0
											\end{array} \right..
\end{equation}

\begin{theorem} \label{Hp-Lq}
Let $N \in \mathbb{N}$, $0 \leq \alpha < Q$, and $p(\cdot) \in \mathcal{P}^{\log}(\mathbb{H}^{n})$ with 
$\frac{Q}{Q+N} < p_{-} \leq p_{+} < \frac{Q}{\alpha}$. If $\frac{1}{q(\cdot)} = \frac{1}{p(\cdot)} - \frac{\alpha}{Q}$, then 
the operator $T_{\alpha}$ defined by (\ref{operator T alpha}) can be extended to a bounded operator from $H^{p(\cdot)}(\mathbb{H}^{n})$ 
into $L^{q(\cdot)}(\mathbb{H}^{n})$.
\end{theorem}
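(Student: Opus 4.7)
The plan is the standard atomic route. By Proposition~\ref{dense set} it suffices to prove
\[
\|T_\alpha f\|_{L^{q(\cdot)}}\lesssim\|f\|_{H^{p(\cdot)}}
\]
for $f\in H^{p(\cdot)}(\mathbb H^n)\cap L^{p_0}(\mathbb H^n)$. I pick $p_0\in(p_+,Q/\alpha)$ when $\alpha>0$ so that Remark~\ref{operator Ta} gives $T_\alpha:L^{p_0}\to L^{q_0}$ boundedly with $\tfrac{1}{q_0}=\tfrac{1}{p_0}-\tfrac{\alpha}{Q}$ (for $\alpha=0$ I use Remark~\ref{operator T0} and the Calder\'on--Zygmund extension of $T_0$ to $L^{p_0}$ for any $1<p_0<\infty$). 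Fix an integer $D\geq\mathcal D_{p(\cdot)}$ large enough for the Fefferman--Stein condition that appears below, and use Theorem~\ref{atomic decomp} to write $f=\sum_j\lambda_ja_j$ in $L^{p_0}$ with $(p(\cdot),p_0,D)$-atoms $a_j$ supported in $\rho$-balls $B_j=B(z_j,r_j)$ of bounded overlap. Continuity of $T_\alpha$ on $L^{p_0}$ propagates the series: $T_\alpha f=\sum_j\lambda_jT_\alpha a_j$ in $L^{q_0}$. Split $T_\alpha a_j=g_j+h_j$ with $g_j:=(T_\alpha a_j)\chi_{2B_j}$ and $h_j:=(T_\alpha a_j)\chi_{(2B_j)^c}$; the quasi-triangle inequality of Lemma~\ref{potencia s}(iii) together with the atomic estimate (\ref{Hp norm atomic}) of Theorem~\ref{atomic decomp} reduces the problem to the two bounds
\[
\Bigl\|\sum_j\lambda_jg_j\Bigr\|_{L^{q(\cdot)}},\ \Bigl\|\sum_j\lambda_jh_j\Bigr\|_{L^{q(\cdot)}}\lesssim\mathcal A(\{\lambda_j\},\{B_j\},p(\cdot)).
\]

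For the local piece, $\|g_j\|_{L^{q_0}}\lesssim\|a_j\|_{L^{p_0}}\leq|B_j|^{1/p_0}/\|\chi_{B_j}\|_{L^{p(\cdot)}}\approx A_j|2B_j|^{1/q_0}$ with $A_j=1/\|\chi_{B_j}\|_{L^{q(\cdot)}}$, using the norm identity $\|\chi_B\|_{L^{q(\cdot)}}\approx|B|^{-\alpha/Q}\|\chi_B\|_{L^{p(\cdot)}}$ from \cite{Fang}. I then pick $q_*\in(q_+/q_0,\underline q)$, which is available once $p_0$ is taken sufficiently close to $Q/\alpha$. Lemma~\ref{potencia s}(iv), the pointwise inequality $|\sum_j\lambda_jg_j|^{q_*}\leq\sum_j\lambda_j^{q_*}|g_j|^{q_*}$ (valid since $q_*<1$), and Proposition~\ref{b_k functions} applied to the nonnegative functions $|g_j|^{q_*}$ with $s=q_0/q_*$ yield
\[
\Bigl\|\sum_j\lambda_jg_j\Bigr\|_{L^{q(\cdot)}}^{q_*}\lesssim\Bigl\|\sum_j\bigl(\lambda_j\chi_{B_j}/\|\chi_{B_j}\|_{L^{q(\cdot)}}\bigr)^{q_*}\Bigr\|_{L^{q(\cdot)/q_*}},
\]
and Lemmas~\ref{potencia s}(iv) and~\ref{ineq p star} together with Proposition~\ref{Estimate_qp} (trivial for $\alpha=0$) bound the right-hand side by $\mathcal A(\{\lambda_j\},\{B_j\},p(\cdot))^{q_*}$.

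For the non-local piece, the moment vanishing of $a_j$ lets me subtract from $K_\alpha(w^{-1}z)$ its degree-$D$ Taylor polynomial in $w$ at $w=z_j$; the Folland--Stein Taylor remainder estimate on $\mathbb H^n$ combined with the decay~(\ref{decay}) of $\widetilde X^IK_\alpha$ for $d(I)=D+1$ produces, for $z\notin 2B_j$,
\[
|h_j(z)|\lesssim\frac{r_j^{\alpha}\bigl(r_j/\rho(z_j^{-1}z)\bigr)^{D+1+Q-\alpha}}{\|\chi_{B_j}\|_{L^{p(\cdot)}}}.
\]
Set $\beta:=\alpha Q/(D+1+Q)$ and $\theta:=(D+1+Q)\underline q/Q$. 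Since $M_\beta(\chi_{B_j})(z)\approx r_j^\beta(r_j/\rho(z_j^{-1}z))^{Q-\beta}$ for $z$ outside $B_j$, the pointwise bound rewrites as $|h_j(z)|^{\underline q}\lesssim[M_\beta(\chi_{B_j})(z)]^\theta/\|\chi_{B_j}\|_{L^{p(\cdot)}}^{\underline q}$. Using $\underline q\leq 1$ one has $\|\sum_j\lambda_jh_j\|_{L^{q(\cdot)}}^{\underline q}\leq\|\sum_j\lambda_j^{\underline q}|h_j|^{\underline q}\|_{L^{q(\cdot)/\underline q}}$, and homogeneity of $M_\beta$ rewrites the inner sum as $\sum_j[M_\beta f_j]^\theta$ with $f_j:=\lambda_j^{\underline q/\theta}\chi_{B_j}/\|\chi_{B_j}\|_{L^{p(\cdot)}}^{\underline q/\theta}$. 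Applying Theorem~\ref{Feff-Stein ineq} with exponent $\beta$, power $r=\theta$, target exponent $\theta q(\cdot)/\underline q$ and source exponent $\theta p(\cdot)/\underline q$ (the scaling $1/q_{\mathrm{FS}}-1/p_{\mathrm{FS}}=-\beta/Q$ holds by the choice of $\beta$ and $\theta$) dominates the sum by $\|\sum_jf_j^\theta\|_{L^{p(\cdot)/\underline q}}$, and two applications of Lemma~\ref{potencia s}(iv) plus the trivial embedding $\ell^{\underline p}\subset\ell^{\underline q}$ (valid because $\underline p\leq\underline q\leq 1$) bound this by $\mathcal A(\{\lambda_j\},\{B_j\},p(\cdot))^{\underline q}$.

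The main technical hurdle I anticipate is the Fefferman--Stein step: the choice $\beta=\alpha Q/(D+1+Q)$, $\theta=(D+1+Q)\underline q/Q$ is forced by matching exponents in the pointwise bound on $|h_j|^{\underline q}$, and the three admissibility conditions $\theta>1$, $\theta p_-/\underline q>1$, and $\theta p_+/\underline q<Q/\beta$ demanded by Theorem~\ref{Feff-Stein ineq} translate precisely into $(Q+D+1)\underline q>Q$ (guaranteed by $D\geq\mathcal D_{p(\cdot)}$ together with $\underline q\geq\underline p$), $(Q+D+1)p_->Q$ (exactly the definition of $\mathcal D_{p(\cdot)}$), and $p_+<Q/\alpha$ (exactly the standing hypothesis). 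The sharpness of all three matchings is what makes the scheme close; all the remaining steps are standard manipulations with the variable Luxemburg norm and the tools of Sections~3--5.
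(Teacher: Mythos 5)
Your scheme is the same as the paper's: atomic decomposition, a local/global split, Proposition \ref{b_k functions} plus Proposition \ref{Estimate_qp} for the local piece, and a Taylor-remainder estimate converted into a power of a fractional maximal function fed into Theorem \ref{Feff-Stein ineq} for the global piece. Your exponent bookkeeping for the Fefferman--Stein step is consistent (with $D=N-1$ it reduces exactly to the paper's choice $\beta=\alpha Q/(Q+N)$, $r=(Q+N)/Q$, applied to $|T_\alpha a_j|$ rather than to $|T_\alpha a_j|^{\underline{q}}$). Two points, however, are genuine gaps as written.

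First, the moment degree $D$. You fix $D\geq\mathcal{D}_{p(\cdot)}$ ``large enough'' and then invoke the decay (\ref{decay}) of $\widetilde{X}^{I}K_\alpha$ for $d(I)=D+1$. But (\ref{decay}) is only assumed for $d(I)\leq N$; the kernel is merely $C^{N}$ off the origin, so $D$ is capped above by $N-1$ and ``large enough'' points in the wrong direction. The hypothesis $\frac{Q}{Q+N}<p_{-}$ is precisely what guarantees $\mathcal{D}_{p(\cdot)}\leq N-1$, i.e.\ that an admissible $D$ exists at all; the paper simply takes $D=N-1$. Your write-up never records the upper constraint, so it does not actually explain where the standing hypothesis on $p_{-}$ relative to $N$ is used, and if one literally took $D>N-1$ the remainder estimate would rest on derivative bounds that are not assumed.

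Second, the escape region. You split at $2B_j$, but the right-invariant Taylor inequality of Folland--Stein (\cite[Corollary 1.44]{Folland}) controls the remainder by a supremum of $|\widetilde{X}^{I}K_\alpha(v\cdot z_j^{-1}\cdot z)|$ over $\rho(v)\leq\beta^{N}\rho(u)$, where $\beta\geq 1$ is a structural constant of the group that need not equal $1$. For $z\in 2\beta^{N}B_j\setminus 2B_j$ the point $v\cdot z_j^{-1}\cdot z$ can approach the singularity at $e$, and the lower bound $\rho(v\cdot z_j^{-1}\cdot z)\gtrsim\rho(z_j^{-1}\cdot z)$ fails; so your pointwise bound on $h_j$ is not justified on all of $(2B_j)^{c}$. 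The paper splits at $2\beta^{N}B_j$ instead and absorbs the larger dilate in the local piece via $\chi_{2\beta^{N}B_j}\leq[M(\chi_{B_j})]^{2}$, Lemma \ref{2B}, and Theorem \ref{Feff-Stein ineq}. Both defects are repairable without changing your architecture, but both must be repaired for the proof to close.
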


\begin{proof} We recall that $Q = 2n+2$. The condition $\frac{Q}{Q+N} < p_{-}$ implies that $N-1 \geq \mathcal{D}_{p(\cdot)}$. So, 
given $f \in H^{p(\cdot)}(\mathbb{H}^{n}) \cap L^{p_0} (\mathbb{H}^{n})$ (with $p_0 > 1$), by Theorem \ref{atomic decomp} with $D=N-1$, there exist a sequence of nonnegative numbers $\{ \lambda_j \}_{j=1}^{\infty}$, a sequence of 
$\rho$ - balls $\{ B_j \}_{j=1}^{\infty}$ and $(p(\cdot), p_0, N-1)$ atoms $a_j$ supported on $B_j$ such that 
$f = \displaystyle{\sum_{j=1}^{\infty} \lambda_j a_j}$ converges in $L^{p_0}(\mathbb{H}^{n})$ and
\begin{equation} \label{atomic Hp norm}
\mathcal{A} \left( \{ \lambda_j \}_{j=1}^{\infty}, \{ B_j \}_{j=1}^{\infty}, p(\cdot) \right) 
\lesssim \| f \|_{H^{p(\cdot)}(\mathbb{H}^{n})}.
\end{equation}
If $0 < \alpha < Q$, we take $\max\{1, p_{+} \} < p_0 < \frac{Q}{\alpha}$. If $\alpha = 0$, we take $p_0 = 2$. Then, by Remark 
\ref{operator Ta}, the operator $T_{\alpha}$ is bounded from $L^{p_0}(\mathbb{H}^{n})$ to $L^{q_0}(\mathbb{H}^{n})$ for $1 < p_0 < \frac{Q}{\alpha}$ and $\frac{1}{q_0} = \frac{1}{p_0} - \frac{\alpha}{Q}$. For the case $\alpha = 0$, by Remark \ref{operator T0}, the operator $T_0$ is bounded on $L^{p_0}(\mathbb{H}^{n})$ with $p_0 =2$. Since $f = \displaystyle{\sum_{j=1}^{\infty} \lambda_j a_j}$ converges in 
$L^{p_0}(\mathbb{H}^{n})$, we have
\[
|T_{\alpha} f(z)| \leq \sum_j \lambda_j |T_{\alpha} a_j(z)|, \,\,\, a.e. \,\, z \in \mathbb{H}^{n}.
\]
Let $\beta$ be the constant in \cite[Corollary 1.44]{Folland}, we observe that $\beta \geq 1$ (see \cite[p. 29]{Folland}). Then, 
for $\frac{1}{q(\cdot)} = \frac{1}{p(\cdot)} - \frac{\alpha}{Q}$
\[
\| T_{\alpha} f \|_{L^{q(\cdot)}} \leq \left\| \sum_{j} \lambda_j \chi_{2\beta^{N}B_{j}} |T_{\alpha} a_j| \right\|_{L^{q(\cdot)}}  + 
\left\| \sum_j \lambda_j \chi_{\mathbb{H}^{n} \setminus 2\beta^{N}B_{j}} |T_{\alpha} a_j| \right\|_{L^{q(\cdot)}} =: L_1 + L_2, 
\]
where $2\beta^{N}B_j$ is the $\rho$ - ball with the same center as $B_j$ but whose radius is expanded by the factor $2\beta^{N}$. This is,
if $B_j = B(z_j, \delta_j)$ then $2\beta^{N}B_j = B(z_j, 2\beta^{N} \delta_j)$.

To estimate $L_1$ we apply, for the case $0 < \alpha < Q$, Remark \ref{operator Ta} with $q_0  > \max\{ \frac{Q}{Q - \alpha}, q_{+} \}$ and 
$\frac{1}{p_0} := \frac{1}{q_0} + \frac{\alpha}{Q}$ (or Remark \ref{operator T0} with $q_0=p_0=2$, if $\alpha =0$). So,
\[
\left\| (T_{\alpha} a_j)^{q_{*}} \right\|_{L^{q_{0}/q_{*}}(2\beta^{N}B_{j})} = 
\left\| T_{\alpha} a_j \right\|_{L^{q_{0}}(2\beta^{N}B_{j})}^{q_{*}} \lesssim \left\| a_j \right\|_{L^{p_{0}}}^{q_{*}} \lesssim 
\frac{| B_j |^{\frac{q_{*}}{p_{0}}}}{\left\| \chi _{B_j }\right\|_{L^{p(\cdot)}}^{q_{*}}} \lesssim 
\frac{\left| 2\beta^{N}B_{j} \right|^{\frac{q_{*}}{q_{0}}}}{\left\| \chi_{2\beta^{N}B_{j}} \right\|_{L^{q(\cdot)/q_{*}}}},
\]
where $0 < q_{*} < \underline{q}$ is fixed and the last inequality follows from the estimate $\| \chi_B \|_{L^{q(\cdot)}} \approx |B|^{-\alpha/Q} \| \chi_B \|_{L^{p(\cdot)}}$, Lemma \ref{potencia s}-(iv), and Lemma \ref{2B} applied to the exponent $q(\cdot)/q_{*}$. Now, since $0 < q_{*} < 1$, we apply the $q_{\ast}$-inequality and Proposition \ref{b_k functions} with 
$b_j = \left( \chi_{2\beta^{N}B_j} \cdot |T_{\alpha} a_j| \right)^{q_{*}}$, 
$A_j = \left\| \chi_{2\beta^{N}B_{j}} \right\|_{L^{q(\cdot)/q_{*}}}^{-1}$ and $s= q_0/q_{*}$, to obtain
\[
L_1 \lesssim \left\| \sum_{j} \left(\lambda_j \, \chi_{2\beta^{N} B_j} \, |T_{\alpha} a_j| \right)^{q_{*}} \right\|^{1/q_{*}}_{L^{q(\cdot)/q_{*}}} \lesssim \left\| \sum_{j} \left( \frac{\lambda_j}{\left\| \chi_{2\beta^{N}B_{j}} \right\|_{L^{q(\cdot)}}} \right)^{q_{*}} 
\chi_{2\beta^{N} B_j} \right\|^{1/q_{*}}_{L^{q(\cdot)/q_{*}}}.
\]
It is easy to check that $\chi_{2 \beta^{N} B_j} \leq [M(\chi_{B_j})]^{2}$. From this inequality, Lemma \ref{2B}, Lemma \ref{potencia s}-(iv), and Theorem \ref{Feff-Stein ineq} we have
\[
L_1 \lesssim \left\| \left\{ \sum_{j} \left( \frac{\lambda_j^{q_{*}/2}}{\left\| \chi_{B_{j}} \right\|^{q_{*}/2}_{L^{q(\cdot)}}}
M(\chi_{B_j})\right)^{2}  \right\}^{1/2} \right\|^{2/q_{*}}_{L^{2q(\cdot)/q_{*}}} \lesssim
\left\| \left\{ \sum_{j} \left( \frac{ \lambda_j \chi_{B_j}}{\| \chi_{B_j} \|_{L^{q(\cdot)}}} \right)^{q_{*}} 
\right\}^{1/q_{*}} \right\|_{L^{q(\cdot)}}.
\]
Lemma \ref{ineq p star} applied to $q(\cdot)$, Proposition \ref{Estimate_qp} and (\ref{atomic Hp norm}) give
\begin{equation} \label{L1}
L_1 \lesssim \mathcal{A}\left( \{ \lambda_j \}_{j=1}^{\infty}, \{ B_j \}_{j=1}^{\infty}, q(\cdot) \right)
\lesssim \mathcal{A}\left( \{ \lambda_j \}_{j=1}^{\infty}, \{ B_j \}_{j=1}^{\infty}, p(\cdot) \right) 
\lesssim \| f \|_{H^{p(\cdot)}}.
\end{equation}

Now, we proceed to estimate $L_2$. For them, we first consider a $(p(\cdot), p_0, N-1)$ - atom $a(\cdot)$ supported on the $\rho$ - ball 
$B = B(z_0, \delta)$. Then, by Remark \ref{operator Ta} (for $0 < \alpha < Q$) or Remark \ref{operator T0} (for $\alpha = 0$) and 
Remark \ref{cambio de centro}, we have
\[
T_{\alpha} a(z) = \int_{B(z_0, \delta)} a(w) K_{\alpha}(w^{-1} \cdot z) \, dw = \int_{B(e, \delta)} a(z_0 \cdot u) 
K_{\alpha}(u^{-1} \cdot z_{0}^{-1} \cdot z) \, du, 
\]
for every $z \notin B(z_0, 2\beta^{N}\delta)$. By Remark \ref{atomo trasladado}, it follows for $z \notin B(z_0, 2\beta^{N}\delta)$ that
\begin{equation} \label{Taz}
T_{\alpha} a(z) = \int_{B(e, \delta)} a(z_0 \cdot u) \left[K_{\alpha}(u^{-1} \cdot z_{0}^{-1} \cdot z) - q(u^{-1}) \right] \, du, 
\end{equation} 
where $u \to q(u^{-1})$ is the right Taylor polynomial of the function $u \to K_{\alpha}(u^{-1} \cdot z_{0}^{-1} \cdot z)$ at 
$e$ of homogeneous degree $N-1$. Then by the right-invariant version of the Taylor inequality in \cite[Corollary 1.44]{Folland},
\begin{equation} \label{Taylor ineq}
\left| K_{\alpha}(u^{-1} \cdot z_{0}^{-1} \cdot z) - q(u^{-1}) \right| \lesssim \rho(u)^{N} 
\sup_{\rho(v) \leq \beta^{N}\rho(u), \, d(I)=N} \left|(\widetilde{X}^{I}K_{\alpha})(v \cdot z_{0}^{-1} \cdot z) \right|.
\end{equation}
Now, for $u \in B(e, \delta)$, $z_{0}^{-1} \cdot z \notin B(e, 2\beta^{N}\delta)$ and $\rho(v) \leq \beta^{N} \rho(u)$, we have
$\rho(z_{0}^{-1} \cdot z) \geq 2\rho(v)$ and hence $\rho(v \cdot z_{0}^{-1} \cdot z) \geq \rho(z_{0}^{-1} \cdot z)/2$, then 
by (\ref{Taylor ineq}) and (\ref{decay}) we get
\[
\left| K_{\alpha}(u^{-1} \cdot z_{0}^{-1} \cdot z) - q(u^{-1}) \right| \lesssim \delta^{N} \rho(z_{0}^{-1} \cdot z)^{\alpha-Q-N}.
\]
This estimate and (\ref{Taz}) lead to
\begin{eqnarray*}
|T_{\alpha}a(z)| \lesssim \delta^{N} \rho(z_{0}^{-1} \cdot z)^{\alpha-Q-N} \| a \|_{L^{1}} &\lesssim& \delta^{N} \rho(z_{0}^{-1} \cdot z)^{\alpha-Q-N} |B|^{1-\frac{1}{p_0}} \| a \|_{L^{p_0}} \\
&\lesssim& \frac{\delta^{N+Q}}{\| \chi_{B} \|_{L^{p(\cdot)}}} \rho(z_{0}^{-1} \cdot z)^{\alpha-Q-N} \\
&\lesssim& \frac{\left( M_{\frac{\alpha Q}{Q+N}}(\chi_{B})(z) \right)^{\frac{Q+N}{Q}}}{\| \chi_{B} \|_{L^{p(\cdot)}}}, \,\,\,\, 
\forall z \notin 2\beta^{N}B.
\end{eqnarray*}
So, for every $j \in \mathbb{N}$, we have that
\begin{equation} \label{L2 estimate}
|T_{\alpha}a_j(z)| \lesssim \frac{\left( M_{\frac{\alpha Q}{Q+N}}(\chi_{B_j})(z) \right)^{\frac{Q+N}{Q}}}{\| \chi_{B_j} \|_{L^{p(\cdot)}}},
\,\,\,\, \text{for all} \,\, z \notin 2\beta^{N}B_j.
\end{equation}
From (\ref{L2 estimate}) follows that
\[
L_2 \lesssim \left\| \sum_j \lambda_j 
\frac{\left( M_{\frac{\alpha Q}{Q+N}}(\chi_{B_j})(\cdot) \right)^{\frac{Q+N}{Q}}}{\| \chi_{B_j} \|_{L^{p(\cdot)}}} \right\|_{L^{q(\cdot)}}
= \left\| \left\{ \sum_j \lambda_j 
\frac{\left( M_{\frac{\alpha Q}{Q+N}}(\chi_{B_j})(\cdot) \right)^{\frac{Q+N}{Q}}}{\| \chi_{B_j} \|_{L^{p(\cdot)}}} 
\right\}^{\frac{Q}{Q+N}} \right\|_{L^{\frac{Q+N}{Q}q(\cdot)}}^{\frac{Q+N}{Q}}.
\]
Since $1 < \frac{Q+N}{Q} p_{-} \leq \frac{Q+N}{Q} p_{+} < \frac{Q+N}{\alpha}$, Theorem \ref{Feff-Stein ineq} gives
\begin{equation} \label{L2}
L_2 \lesssim 
\left\| \left\{ \sum_j \lambda_j 
\frac{\chi_{B_j}}{\| \chi_{B_j} \|_{L^{p(\cdot)}}} \right\}^{\frac{Q}{Q+N}} \right\|_{L^{\frac{Q+N}{Q}p(\cdot)}}^{\frac{Q+N}{Q}} =
\left\| \sum_j \lambda_j \frac{\chi_{B_j}}{\| \chi_{B_j} \|_{L^{p(\cdot)}}} \right\|_{L^{p(\cdot)}}
\end{equation}
\[
\lesssim
\left\| \left\{ \sum_j \left( \lambda_j \frac{\chi_{B_j}}{\| \chi_{B_j} \|_{L^{p(\cdot)}}} \right)^{\underline{p}}\right\}^{1/\underline{p}} 
\right\|_{L^{p(\cdot)}} = \mathcal{A}\left( \{ \lambda_j \}_{j=1}^{\infty}, \{ B_j \}_{j=1}^{\infty}, p(\cdot) \right) 
\lesssim \| f \|_{H^{p(\cdot)}}.
\]
Finally, (\ref{L1}) and (\ref{L2}) allow us to conclude that
\[
\| T_{\alpha} f \|_{L^{q(\cdot)}(\mathbb{H}^{n})} \lesssim \| f \|_{H^{p(\cdot)}(\mathbb{H}^{n})},
\]
for all $f \in H^{p(\cdot)}(\mathbb{H}^{n}) \cap L^{p_0}(\mathbb{H}^{n})$, so the theorem follows from Proposition \ref{dense set}.
\end{proof}

\begin{theorem} \label{Hp-Hq}
Let $N \in \mathbb{N}$, $0 \leq \alpha < Q$, and $p(\cdot) \in \mathcal{P}^{\log}(\mathbb{H}^{n})$ with 
$\frac{Q}{Q+N} < p_{-} \leq p_{+} < \frac{Q}{\alpha}$. If $\frac{1}{q(\cdot)} = \frac{1}{p(\cdot)} - \frac{\alpha}{Q}$, then 
the operator $T_{\alpha}$ given by (\ref{operator T alpha}) can be extended to a bounded operator from $H^{p(\cdot)}(\mathbb{H}^{n})$ 
into $H^{q(\cdot)}(\mathbb{H}^{n})$.
\end{theorem}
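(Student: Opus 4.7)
The plan is to imitate the proof of Theorem \ref{Hp-Lq} verbatim, but replacing $T_{\alpha}a_j$ by the grand maximal function $\mathcal{M}_L(T_{\alpha}a_j)$ throughout, and estimating $\|\mathcal{M}_L(T_{\alpha} f)\|_{L^{q(\cdot)}}$, which by Definition \ref{Dp def} is $\|T_{\alpha}f\|_{H^{q(\cdot)}}$. By Proposition \ref{dense set} it suffices to work with $f \in H^{p(\cdot)}(\mathbb{H}^n) \cap L^{p_0}(\mathbb{H}^n)$ for the same $p_0$ used in the proof of Theorem \ref{Hp-Lq}. The atomic decomposition (Theorem \ref{atomic decomp}, with $D = N-1$) gives $f = \sum_j \lambda_j a_j$ in $L^{p_0}$; by the $L^{p_0} \to L^{q_0}$ boundedness of $T_{\alpha}$, the series $T_{\alpha}f = \sum_j \lambda_j T_{\alpha}a_j$ converges in $L^{q_0}$. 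For each fixed $t>0$ and $\phi \in \mathcal{F}_L$, continuity of convolution with $\phi_t \in \mathcal{S} \subset L^{q_0'}$ yields
\[
(T_{\alpha}f)\ast \phi_t(z) = \sum_j \lambda_j (T_{\alpha}a_j)\ast\phi_t(z),
\]
so taking the supremum in $(t,\phi)$ gives $\mathcal{M}_L(T_{\alpha}f)(z) \leq \sum_j \lambda_j \mathcal{M}_L(T_{\alpha}a_j)(z)$ almost everywhere. I then split the resulting sum into the local piece (cutoff $\chi_{2\beta^N B_j}$) and the global piece (cutoff $\chi_{(2\beta^N B_j)^c}$), producing $J_1 + J_2$ in exact parallel with the decomposition $L_1 + L_2$.

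For $J_1$ I use the standard fact that, when $L \geq \mathcal{D}_{p(\cdot)} + Q + 3$ and $q_0 > 1$, the grand maximal operator $\mathcal{M}_L$ is bounded on $L^{q_0}(\mathbb{H}^n)$ (it is dominated by a positive power of the Hardy--Littlewood maximal function). Combined with the $L^{p_0}\to L^{q_0}$ boundedness of $T_{\alpha}$ and the atomic size condition $a_2)$, this gives $\|\mathcal{M}_L(T_{\alpha}a_j)\|_{L^{q_0}} \lesssim \|a_j\|_{L^{p_0}} \lesssim |B_j|^{1/p_0}/\|\chi_{B_j}\|_{L^{p(\cdot)}}$, which is exactly the size estimate that drove the $L_1$ bound. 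Therefore the same chain of inequalities (Proposition \ref{b_k functions} applied to $b_j = (\chi_{2\beta^N B_j}\mathcal{M}_L(T_{\alpha}a_j))^{q_*}$, Lemma \ref{2B}, Theorem \ref{Feff-Stein ineq}, Lemma \ref{ineq p star}, Proposition \ref{Estimate_qp}, and (\ref{atomic Hp norm})) yields $J_1 \lesssim \|f\|_{H^{p(\cdot)}}$ without modification.

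For $J_2$ the crucial step is the off-diagonal pointwise estimate
\[
\mathcal{M}_L(T_{\alpha}a_j)(z) \lesssim \frac{\bigl(M_{\alpha Q/(Q+N)}(\chi_{B_j})(z)\bigr)^{(Q+N)/Q}}{\|\chi_{B_j}\|_{L^{p(\cdot)}}}, \qquad z \notin 2\beta^N B_j,
\]
i.e.\ the analogue of (\ref{L2 estimate}) with $|T_{\alpha}a_j|$ replaced by $\mathcal{M}_L(T_{\alpha}a_j)$. I would derive it by writing $(T_{\alpha}a_j)\ast\phi_t = a_j \ast (K_{\alpha}\ast\phi_t)$, translating the center of $a_j$ to $e$ using Remark \ref{atomo trasladado}, and using the moment-cancellation condition $a_3)$ to subtract from $(K_{\alpha}\ast\phi_t)(u^{-1}\cdot z_j^{-1}\cdot z)$ its right Taylor polynomial in $u$ of homogeneous degree $N-1$ at $e$. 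The right-invariant Taylor inequality \cite[Corollary 1.44]{Folland} then bounds the remainder by $\rho(u)^N$ times a sup of $|\widetilde{X}^I(K_{\alpha}\ast\phi_t)(v\cdot z_j^{-1}z)|$ over $d(I) = N$ and $\rho(v)\leq \beta^N\rho(u)$, exactly as in (\ref{Taylor ineq}); once the uniform estimate below is available, the rest of the argument (using $\rho(v \cdot z_j^{-1} z) \geq \rho(z_j^{-1}z)/2$, condition $a_2)$, and $\|a_j\|_{L^1} \lesssim |B_j|^{1-1/p_0}\|a_j\|_{L^{p_0}}$) reproduces the bound above, and $J_2 \lesssim \|f\|_{H^{p(\cdot)}}$ follows from Theorem \ref{Feff-Stein ineq} just as in (\ref{L2}).

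The main obstacle is the auxiliary sub-claim that
\[
|((\widetilde{X}^I K_{\alpha})\ast\phi_t)(w)| \lesssim \rho(w)^{\alpha - Q - N}, \qquad d(I) = N,
\]
holds uniformly in $t>0$, $\phi \in \mathcal{F}_L$ and $w \neq e$, with an implicit constant depending only on $L$. I would prove it by splitting the convolution integral according to whether $\rho(y) \leq \rho(w)/2$ or not: on the first region the quasi-triangle inequality gives $\rho(y^{-1}w) \approx \rho(w)$, so the decay (\ref{decay}) of $\widetilde{X}^I K_{\alpha}$ combined with $\int|\phi_t|\lesssim 1$ provides the desired bound; on the complementary region one exploits the rapid decay of $\phi \in \mathcal{F}_L$ encoded in $\|\phi\|_{\mathcal{S}(\mathbb{H}^n),L}\leq 1$ and integrates the locally integrable singularity $\rho^{\alpha - Q - N}$, subdividing further according to whether $t \lesssim \rho(w)$ or $t \gtrsim \rho(w)$ so as to absorb the scaling. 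For $\alpha = 0$ one must additionally interpret $K_0\ast\phi_t$ via the $L^2$-extension of $T_0$ (Remark \ref{operator T0}), which presents no new difficulty. This Calderón--Zygmund-type estimate is the only genuinely new ingredient beyond the proof of Theorem \ref{Hp-Lq}.
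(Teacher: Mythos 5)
Your overall architecture is exactly the paper's: the same atomic decomposition with $D=N-1$, the same pointwise domination $\mathcal{M}_L(T_\alpha f)\le\sum_j\lambda_j\mathcal{M}_L(T_\alpha a_j)$, the same split $J_1+J_2$ with cutoff $\chi_{2\beta^N B_j}$, the same $L^{q_0}$-boundedness of $\mathcal{M}_L$ to make $J_1$ reduce to the $L_1$ estimate, and the same Taylor-remainder argument for $J_2$ hinging on the uniform bound $|((\widetilde{X}^I K_\alpha)\ast\phi_t)(w)|\lesssim\rho(w)^{\alpha-Q-N}$ for $d(I)=N$. You also correctly identify that this uniform convolution estimate is the only genuinely new ingredient. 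The paper does not prove it; it invokes \cite[Lemma 6.9]{Folland} (with the observation that on a stratified group the lemma applies for $d(I)\le r$).

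The gap is in your proposed proof of that sub-claim. For $d(I)=N$ the bound $|\widetilde{X}^I K_\alpha(y)|\lesssim\rho(y)^{\alpha-Q-N}$ gives a function that is \emph{not} locally integrable near $e$ whenever $N\ge\alpha$ (in polar coordinates $\int_0^1 r^{\alpha-N-1}\,dr=\infty$), which is the generic situation here since $N\ge1$ and $\alpha$ may be $0$ or small. Consequently your first region $\{\rho(y)\le\rho(w)/2\}$ contains the singularity of $\widetilde{X}^I K_\alpha$, and the proposed bound ``decay of $\widetilde{X}^I K_\alpha$ combined with $\int|\phi_t|\lesssim1$'' produces a divergent integral; likewise the phrase ``integrates the locally integrable singularity $\rho^{\alpha-Q-N}$'' on the complementary region is false in the same range. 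Any correct proof must exploit cancellation, e.g.\ by throwing the derivatives onto $\phi_t$ near the singularity (splitting $K_\alpha$ into a compactly supported locally integrable piece plus a smooth tail, or subtracting a Taylor polynomial of $\phi_t$), which is precisely what the proof of Folland--Stein's Lemma 6.9 does. The case $\alpha=0$, where $K_0$ is only a distribution and even $K_0\ast\phi_t$ itself needs a distributional interpretation, is asserted to present ``no new difficulty'' but is not actually handled. If you replace your sketch of the sub-claim by a citation of \cite[Lemma 6.9]{Folland} (or by a correct proof along the lines just indicated), the rest of your argument goes through and coincides with the paper's.
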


\begin{proof}
The inequality $\frac{Q}{Q+N} < p_{-}$ implies that $N - 1 \geq \mathcal{D}_{p(\cdot)}$, and thus  
given $f \in H^{p(\cdot)}(\mathbb{H}^{n}) \cap L^{p_0}(\mathbb{H}^{n})$ (with $p_0 > 1$ chosen as in Theorem \ref{Hp-Lq}), by 
Theorem \ref{atomic decomp}, there exist a sequence of nonnegative numbers $\{ \lambda_j \}_{j=1}^{\infty}$, a sequence of 
$\rho$ - balls $\{ B_j \}_{j=1}^{\infty}$ and $(p(\cdot), p_0, N-1)$ atoms $a_j$ supported on $B_j$ such that 
$f = \displaystyle{\sum_{j=1}^{\infty} \lambda_j a_j}$ converges in $L^{p_0}(\mathbb{H}^{n})$ and
$\mathcal{A} \left( \{ \lambda_j \}_{j=1}^{\infty}, \{ B_j \}_{j=1}^{\infty}, p(\cdot) \right) 
\lesssim \| f \|_{H^{p(\cdot)}(\mathbb{H}^{n})}$. Since $T_{\alpha}$ is bounded from $L^{p_0}(\mathbb{H}^{n})$ into 
$L^{q_0}(\mathbb{H}^{n})$ and $H^{q_0}(\mathbb{H}^{n}) \equiv L^{q_0}(\mathbb{H}^{n})$ with comparable norms, it follows for every $L \geq 0$ that
\[
\mathcal{M}_{L}(T_{\alpha} f)(z) \leq \sum_{j=1}^{\infty} \lambda_j \mathcal{M}_{L}(T_{\alpha} a_j)(z), \,\,\, a.e. \,\, z \in \mathbb{H}^{n}.
\]
Let $\beta \geq 1$ be the constant as in Theorem \ref{Hp-Lq}. Then, for $\frac{1}{q(\cdot)} = \frac{1}{p(\cdot)} - \frac{\alpha}{Q}$ and
$L \geq {D}_{q(\cdot)} + 2Q + 3 + N - \alpha$ (see Definition \ref{Dp def}), 
\[
\| T_{\alpha} f \|_{H^{q(\cdot)}} = \| \mathcal{M}_L(T_{\alpha} f) \|_{L^{q(\cdot)}} \leq \left\| \sum_{j} \lambda_j \chi_{2\beta^{N}B_{j}} 
\mathcal{M}_L(T_{\alpha} a_j) \right\|_{L^{q(\cdot)}}  
\]
\[
+ 
\left\| \sum_j \lambda_j \chi_{\mathbb{H}^{n} \setminus 2\beta^{N}B_{j}} \mathcal{M}_L(T_{\alpha} a_j) \right\|_{L^{q(\cdot)}} =: 
J_1 + J_2, 
\]
To estimate $J_1$, we observe that for $0 < q_{*} < \underline{q}$
\[
\left\| [\mathcal{M}_L (T_{\alpha} a_j)]^{q_{*}} \right\|_{L^{q_{0}/q_{*}}(2\beta^{N}B_{j})} = 
\left\| \mathcal{M}_L (T_{\alpha} a_j) \right\|_{L^{q_{0}}(2\beta^{N}B_{j})}^{q_{*}} \lesssim 
\left\| T_{\alpha} a_j \right\|_{L^{q_{0}}}^{q_{*}} 
\]
\[
\lesssim \left\| a_j \right\|_{L^{p_{0}}}^{q_{*}}
\lesssim 
\frac{| B_j |^{\frac{q_{*}}{p_{0}}}}{\left\| \chi _{B_j }\right\|_{L^{p(\cdot)}}^{q_{*}}} \lesssim 
\frac{\left| 2\beta^{N}B_{j} \right|^{\frac{q_{*}}{q_{0}}}}{\left\| \chi_{2\beta^{N}B_{j}} \right\|_{L^{q(\cdot)/q_{*}}}}.
\]
Then, by proceeding as in the estimate of $L_1$ in Theorem \ref{Hp-Lq}, we get
\[
J_1 \lesssim \mathcal{A} \left( \{ \lambda_j \}_{j=1}^{\infty}, \{ B_j \}_{j=1}^{\infty}, p(\cdot) \right) 
\lesssim \| f \|_{H^{p(\cdot)}}.
\]

Now, we estimate $J_2$. For them, let $\phi \in \mathcal{S}(\mathbb{H}^{n})$ with $\| \phi \|_{\mathcal{S}(\mathbb{H}^{n}), \, L} \leq 1$
and let $a(\cdot)$ be a $(p(\cdot), p_0, N-1)$ - atom centered at the $\rho$ - ball $B=B(z_0, \delta)$. Then, for $z \notin 2\beta^{N} B$ and every $t > 0$, we have
\[
((T_{\alpha} a) \ast \phi_t)(z) = \int_{B(z_0, \delta)} a(w) (K_{\alpha} \ast \phi_t)(w^{-1} \cdot z) \, dw = 
\int_{B(e, \delta)} a(z_0 \cdot u) (K_{\alpha} \ast \phi_t)(u^{-1} \cdot z_{0}^{-1} \cdot z) \, du. 
\]
By Remark \ref{atomo trasladado}, it follows for $z \notin 2\beta^{N} B$ and every $t > 0$ that
\begin{equation} 
((T_{\alpha} a) \ast \phi_t)(z) = \int_{B(e, \delta)} a(z_0 \cdot u) \left[(K_{\alpha} \ast \phi_t)(u^{-1} \cdot z_{0}^{-1} \cdot z) - 
q_t(u^{-1}) \right] \, du, 
\end{equation} 
where $u \to q_t(u^{-1})$ is the right Taylor polynomial of the function $u \to (K_{\alpha} \ast \phi_t)(u^{-1} \cdot z_{0}^{-1} \cdot z)$ at 
$e$ of homogeneous degree $N-1$. Then by the right-invariant version of the Taylor inequality in \cite[Corollary 1.44]{Folland},
\[
\left| (K_{\alpha} \ast \phi_t)(u^{-1} \cdot z_{0}^{-1} \cdot z) - q_t(u^{-1}) \right| \lesssim \rho(u)^{N} 
\sup_{\rho(v) \leq \beta^{N}\rho(u), \, d(I)=N} \left|\widetilde{X}^{I}(K_{\alpha} \ast \phi_t)(v \cdot z_{0}^{-1} \cdot z) \right|.
\]
To apply \cite[Lemma 6.9]{Folland} with $r=N$ (and taking into account that $\mathbb{H}^{n}$ is stratified group, we observe that such lemma holds for $d(I) \leq r$ rather than for $|I| \leq r$), we get
\[
|\widetilde{X}^{I}(K_{\alpha} \ast \phi_t)(v \cdot w)| = |((\widetilde{X}^{I} K_{\alpha}) \ast \phi_t)(v \cdot w)|
\lesssim \rho(v \cdot w)^{\alpha - Q - d(I)}, \,\,\, \text{for} \,\,\, t>0, \, d(I) \leq N.
\]
This estimate does not depend on $t$ and $\phi \in \mathcal{S}(\mathbb{H}^{n})$ with $\| \phi \|_{\mathcal{S}(\mathbb{H}^{n}), \, L} \leq 1$ 
($L > Q + N - \alpha$). Finally, according to the ideas to estimate $L_2$ in Theorem \ref{Hp-Lq}, we obtain
\[
J_2 \lesssim \mathcal{A}\left( \{ \lambda_j \}_{j=1}^{\infty}, \{ B_j \}_{j=1}^{\infty}, p(\cdot) \right) 
\lesssim \| f \|_{H^{p(\cdot)}},
\]
for all $f \in H^{p(\cdot)}(\mathbb{H}^{n}) \cap L^{p_0}(\mathbb{H}^{n})$, and so the proof is concluded.
\end{proof}

Let $0 < \alpha < Q$, the Riesz potential $\mathcal{R}_{\alpha}$ on $\mathbb{H}^{n}$ is defined by
\begin{equation} \label{Riesz}
\mathcal{R}_{\alpha}f(z) = \int_{\mathbb{H}^{n}} f(w) \rho(w^{-1} \cdot z)^{\alpha-Q} \, dw,
\end{equation}
where $\rho(\cdot)$ is the Koranyi norm given by (\ref{Koranyi norm}). It is clear that 
$\rho(\cdot)^{\alpha - Q} \in C^{\infty}(\mathbb{H}^{n} \setminus \{e\}) = \displaystyle{\bigcap_{N \in \mathbb{N}}} C^{N}(\mathbb{H}^{n} \setminus \{e\})$ and satisfies the condition (\ref{decay}) for every $N \in \mathbb{N}$. Finally, to apply the Theorems \ref{Hp-Lq} and
\ref{Hp-Hq} with $K_{\alpha}(\cdot) = \rho(\cdot)^{\alpha-Q}$ and $0 < \alpha < Q$, we obtain the following result.

\begin{theorem}  \label{Riesz estimates}
Let $0 < \alpha < Q$, and $p(\cdot) \in \mathcal{P}^{\log}(\mathbb{H}^{n})$ with 
$0 < p_{-} \leq p_{+} < \frac{Q}{\alpha}$. If $\frac{1}{q(\cdot)} = \frac{1}{p(\cdot)} - \frac{\alpha}{Q}$, then 
the Riesz potential $\mathcal{R}_{\alpha}$ given by (\ref{Riesz}) can be extended to a bounded operator from $H^{p(\cdot)}(\mathbb{H}^{n})$ 
into $L^{q(\cdot)}(\mathbb{H}^{n})$ and from $H^{p(\cdot)}(\mathbb{H}^{n})$ into $H^{q(\cdot)}(\mathbb{H}^{n})$.
\end{theorem}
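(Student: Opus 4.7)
The plan is to realize the Riesz potential $\mathcal{R}_\alpha$ as a right convolution operator whose kernel is of type $(\alpha, N)$ in the sense of Folland--Stein, and then invoke Theorems \ref{Hp-Lq} and \ref{Hp-Hq}. The hypothesis of Theorem \ref{Riesz estimates} only asks $0 < p_- \leq p_+ < Q/\alpha$ (dropping any lower bound on $p_-$), whereas Theorems \ref{Hp-Lq} and \ref{Hp-Hq} need $Q/(Q+N) < p_-$. The key point is that since the Koranyi kernel $K_\alpha(z) := \rho(z)^{\alpha - Q}$ is smooth on $\mathbb{H}^n \setminus \{e\}$ (so it lies in $C^N$ for every $N \in \mathbb{N}$) and satisfies the decay inequality \eqref{decay} for \emph{every} multiindex $I$, we have the flexibility to choose $N$ as large as we wish, and in particular large enough to accommodate the given $p_-$.

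First, given $p(\cdot)$ with $0 < p_- \leq p_+ < Q/\alpha$, I would pick $N \in \mathbb{N}$ large enough so that $\frac{Q}{Q+N} < p_-$; any integer $N > Q/p_- - Q$ suffices. Then the exponent $p(\cdot)$ meets the hypothesis of Theorems \ref{Hp-Lq} and \ref{Hp-Hq} for this value of $N$. It remains only to check that $K_\alpha(\cdot) = \rho(\cdot)^{\alpha - Q}$ is a kernel of type $(\alpha, N)$ in the sense of \eqref{decay}, that is, $|\widetilde{X}^I K_\alpha(z)| \lesssim \rho(z)^{\alpha - Q - d(I)}$ for all $d(I) \leq N$ and all $z \neq e$.

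The verification rests on the homogeneity observations made in Section 2. Since $\rho$ is homogeneous of degree $1$ on $\mathbb{H}^n$, the function $K_\alpha = \rho^{\alpha-Q}$ is homogeneous of degree $\alpha - Q$. The right-invariant derivative $\widetilde{X}^I$ is homogeneous of degree $d(I)$, hence $\widetilde{X}^I K_\alpha$ is a smooth function on $\mathbb{H}^n \setminus \{e\}$ which is homogeneous of degree $\alpha - Q - d(I)$. By continuity, $\widetilde{X}^I K_\alpha$ is bounded on the compact set $\{z : \rho(z) = 1\}$, and combining boundedness on the unit $\rho$-sphere with the homogeneity scaling $\widetilde{X}^I K_\alpha(r \cdot z) = r^{\alpha - Q - d(I)} (\widetilde{X}^I K_\alpha)(z)$ gives precisely the required pointwise estimate for every $z \neq e$ and every multiindex $I$. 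In particular, \eqref{decay} holds for all $d(I) \leq N$.

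With the kernel verified to be of type $(\alpha, N)$, the operator $\mathcal{R}_\alpha$ coincides with the right convolution operator $T_\alpha$ of \eqref{operator T alpha} (with $0 < \alpha < Q$), so Theorems \ref{Hp-Lq} and \ref{Hp-Hq} yield the boundedness $H^{p(\cdot)} \to L^{q(\cdot)}$ and $H^{p(\cdot)} \to H^{q(\cdot)}$ respectively, where $\frac{1}{q(\cdot)} = \frac{1}{p(\cdot)} - \frac{\alpha}{Q}$. The only conceptual step that requires care is the choice of $N$ depending on $p_-$ (not on the individual atom), together with the homogeneity-plus-smoothness argument for the derivative bound; neither is a serious obstacle, which is why this result is stated as a direct application of the main theorems.
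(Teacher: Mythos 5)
Your proposal is correct and follows the same route as the paper: the paper likewise observes that $\rho(\cdot)^{\alpha-Q}$ is smooth away from $e$ and satisfies the decay condition (\ref{decay}) for every $N$, and then applies Theorems \ref{Hp-Lq} and \ref{Hp-Hq}; you have simply made explicit the choice of $N$ with $\frac{Q}{Q+N}<p_-$ and the homogeneity-plus-compactness argument that the paper leaves as ``it is clear.''
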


\bigskip
\address{
Departamento de Matem\'atica \\
Universidad Nacional del Sur \\
8000 Bah\'{\i}a Blanca, Buenos Aires \\
Argentina}
{pablo.rocha@uns.edu.ar}

\end{document}